\documentclass[12pt]{amsart}

\setlength{\tabcolsep}{2pt}

\usepackage{xcolor}

\definecolor{norange}{rgb}{0.898, 0.621, 0.0}
\definecolor{skyblue}{rgb}{0.336, 0.703, 0.910}
\definecolor{bluishgreen}{rgb}{0, 0.617, 0.449}
\definecolor{nyellow}{rgb}{0.937, 0.890, 0.258}
\definecolor{nblue}{rgb}{0, 0.445, 0.695}
\definecolor{nred}{rgb}{0.832, 0.367, 0}
\definecolor{npurple}{rgb}{0.797, 0.473, 0.652}



\usepackage{amscd,amsmath,amssymb,amsthm,amsfonts,comment}

\usepackage[hidelinks]{hyperref}
\usepackage{tikz}
\tikzstyle{vertex}=[circle, draw, fill=black, inner sep=0pt, minimum size=3pt]
\usetikzlibrary{decorations.text}
\usetikzlibrary{patterns}
\usetikzlibrary{positioning}


\newcommand{\Rr}{\mathbb{R}}

\newcommand\ideal[1]{\left< #1 \right>}
\newcommand\set[1]{\{ #1 \}}
\newcommand\abs[1]{\left| #1 \right|}
\newcommand\parens[1]{\left( #1 \right)}

\DeclareMathOperator{\link}{link}
\DeclareMathOperator{\lk}{link}

\DeclareMathOperator{\init}{init}


\newtheorem{theorem}{Theorem}[section]

\newtheorem{example}[theorem]{Example}

\newtheorem{proposition}[theorem]{Proposition}

\newtheorem{definition}[theorem]{Definition}

\newtheorem{lemma}[theorem]{Lemma}

\newtheorem{corollary}[theorem]{Corollary}

\newtheorem{conjecture}[theorem]{Conjecture}

\newtheorem{question}{Question}

\newtheorem{remark}[theorem]{Remark}


\makeatletter
\renewcommand*\env@matrix[1][*\c@MaxMatrixCols c]{%
  \hskip -\arraycolsep
  \let\@ifnextchar\new@ifnextchar
  \array{#1}}
\makeatother








\title{Resolving Stanley's conjecture on $k$-fold acyclic complexes}




\author{Joseph Doolittle}
\address{Institut f\"{u}r Geometrie, Technische Universit\"{a}t Graz}
\email{jdoolittle@tugraz.at}

\author{Bennet Goeckner}
\address{Department of Mathematics, University of Washington}
\email{goeckner@uw.edu}

\begin{document}

\begin{abstract}
In 1993 Stanley showed that if a simplicial complex is acyclic over some field, then its face poset can be decomposed into disjoint rank $1$ boolean intervals whose minimal faces together form a subcomplex. Stanley further conjectured that complexes with a higher notion of acyclicity could be decomposed in a similar way using boolean intervals of higher rank. We provide an explicit counterexample to this conjecture. We also prove a version of the conjecture for boolean trees and show that the original conjecture holds when this notion of acyclicity is as high as possible.
\end{abstract}

\maketitle

\section{Introduction}

The interplay between combinatorial and topological properties of simplicial complexes has been a subject of great interest for researchers for many decades (see, e.g.,  \cite{Bj95, KN16, St96}). One particularly beautiful result due to Stanley  connects the homology of the geometric realization of a complex to a well-behaved decomposition of its face poset. 

\begin{theorem}\label{StanleyTheorem}\emph{\cite[Theorem 1.2]{St93}}
Let \(\Delta\) be a simplicial complex that is acyclic over some field $\Bbbk$. The face poset of \(\Delta\) can be written as the disjoint union of rank $1$  boolean intervals such that the minimal faces of these intervals together form a subcomplex of $\Delta$.
\end{theorem}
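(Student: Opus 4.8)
The plan is to recast the statement combinatorially and then prove the resulting existence claim by induction. Let $P(\Delta)$ be the face poset of $\Delta$ with the empty face adjoined. A decomposition as in the theorem is precisely the data of a subcomplex $\Gamma \subseteq \Delta$ (the set of minimal faces) together with a bijection $\mu\colon \Gamma \to \Delta \setminus \Gamma$ of the form $\mu(F) = F \cup \{v_F\}$ with $v_F \notin F$; equivalently, it is a perfect matching of the Hasse diagram of $P(\Delta)$ whose set of lower endpoints is downward closed. A short count with the face numbers shows that, under acyclicity, any such $\Gamma$ has exactly $\operatorname{rank}\partial_{i+1}$ faces of each dimension $i$ (with $\partial$ the reduced simplicial boundary), and in particular $\tilde\chi(\Delta) = 0$, which acyclicity supplies for free. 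So the theorem is an \emph{existence} statement, and the real task is to extract a local combinatorial object from the global vanishing of homology.

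For the induction I would strip off a vertex $v$ and write $\Delta = \operatorname{del}_\Delta(v) \cup \operatorname{st}_\Delta(v)$ with $\operatorname{st}_\Delta(v) = v * \operatorname{lk}_\Delta(v)$ and $\operatorname{del}_\Delta(v) \cap \operatorname{st}_\Delta(v) = \operatorname{lk}_\Delta(v)$. The star is a cone, so its part of $P(\Delta)$ decomposes transparently --- pair each face $A \in \operatorname{lk}_\Delta(v)$ with $A \cup \{v\}$ --- leaving only the faces of $\operatorname{del}_\Delta(v)$ that do not lie in $\operatorname{lk}_\Delta(v)$ to be dealt with. Here is the fortunate point: since a cone is acyclic, Mayer--Vietoris forces the inclusion $\operatorname{lk}_\Delta(v) \hookrightarrow \operatorname{del}_\Delta(v)$ to be a homology isomorphism, i.e. $\tilde H_*(\operatorname{del}_\Delta(v), \operatorname{lk}_\Delta(v)) = 0$. (The link itself need not be acyclic --- in a triangulated dunce hat every vertex link is a circle --- so one genuinely has to work relatively rather than just hunt for a vertex with acyclic link.) Thus the pair $(\operatorname{del}_\Delta(v), \operatorname{lk}_\Delta(v))$ is again acyclic in a relative sense, and the induction wants to run on a relative form of the theorem.

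The relative statement I would aim for is: if $\Gamma_0 \subseteq \Delta$ is a subcomplex with $\tilde H_*(\Delta, \Gamma_0) = 0$, then the faces of $\Delta$ not in $\Gamma_0$ decompose into rank $1$ boolean intervals whose minimal faces, together with $\Gamma_0$, form a subcomplex; with $\Gamma_0 = \emptyset$ this is Theorem~\ref{StanleyTheorem}. The part I expect to be genuinely hard is closing the recursion, which has three moving pieces: (i) showing $\tilde H_*(\Delta, \Gamma_0) = 0$ propagates to $\tilde H_*(\operatorname{del}_\Delta(v), \Gamma_0 \cup \operatorname{lk}_\Delta(v)) = 0$, via the long exact sequences of the triples $\operatorname{lk}_\Delta(v) \subseteq \operatorname{del}_\Delta(v) \subseteq \Delta$ and $\Gamma_0 \subseteq \Gamma_0 \cup \operatorname{lk}_\Delta(v) \subseteq \Delta$ together with excision; (ii) the degenerate case in which $\Delta$ has no vertex outside $\Gamma_0$, where one must instead peel off a minimal face of $\Delta \setminus \Gamma_0$ and argue that it can only appear as the bottom of an upward-matched interval; and (iii) checking that the intervals coming from the star glue to those produced inductively for $(\operatorname{del}_\Delta(v), \operatorname{lk}_\Delta(v))$ so that the combined set of minimal faces is genuinely downward closed in $\Delta$. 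I expect the main obstacle to be step (i) --- keeping the relative hypothesis alive all the way down, and identifying exactly the correct relative hypothesis so that it both specializes to the theorem and is preserved.

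One caveat informs the whole approach: the matching produced is \emph{not} in general acyclic in the discrete-Morse sense --- a triangulated dunce hat is contractible, hence acyclic, but is not collapsible, so it has no perfect acyclic matching of its augmented face poset even though Theorem~\ref{StanleyTheorem} applies to it --- so the weaker ``minimal faces form a subcomplex'' condition is doing essential work and there is no route through standard Morse theory. A more hands-on alternative I would try in parallel is to build $\Gamma$ and $\mu$ dimension by dimension from the top, where every top-dimensional face must be non-minimal and matched down to a distinct codimension-one subface; this initial step is possible by Hall's theorem: $\partial_d$ is injective, so any set $S$ of top faces has $|S| = \dim \partial_d \langle S \rangle \le |N(S)|$, with $\langle S \rangle$ the linear span and $N(S)$ the set of codimension-one subfaces of members of $S$. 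The difficulty there is that the successive level-by-level matchings must be chosen compatibly so the union of lower faces stays downward closed, which again seems to need a global rather than greedy choice.
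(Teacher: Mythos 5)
The paper itself offers no proof of this statement --- it is quoted from \cite{St93}, and the closest result proved here (Theorem \ref{AcycBoolTreeThm}) goes through algebraic shifting and iterated homology, not a vertex-deletion induction --- so your proposal must stand on its own, and as written it has a genuine gap: the two steps that carry all the content are precisely the ones you defer. First, the relative statement you induct on ($\tilde{H}_*(\Delta,\Gamma_0)=0$ implies the faces of $\Delta$ outside $\Gamma_0$ admit a rank $1$ decomposition whose bottoms together with $\Gamma_0$ form a subcomplex) is a substantial strengthening of the theorem and is nowhere established; note that the counterexample machinery of this very paper rests on the failure of the analogous relative decomposition statements for rank $k\ge 2$, so the rank $1$ relative claim needs an actual argument, not plausibility. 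Second, the inductive step does not close even granting that statement as the target. Your ``cone part decomposes transparently'' move is only available at the first (absolute) step: once $\Gamma_0\ne\varnothing$, the star of the next vertex $u$ meets $\Gamma_0$, so the pairing $A\leftrightarrow A\cup\{u\}$ either matches faces that are already in $\Gamma_0$ or leaves faces containing $u$ unmatched, and you give no replacement. Correspondingly, your step (i) fails in general: with $u\notin\Gamma_0$, excision plus the long exact sequence of the triple $\Gamma_0\subseteq\Gamma_0\cup\st_\Delta(u)\subseteq\Delta$ gives
$$
\tilde{H}_i\bigl(\operatorname{del}_\Delta(u),\,\Gamma_0\cup\lk_\Delta(u)\bigr)\;\cong\;\tilde{H}_{i-1}\bigl(\st_\Delta(u),\,\Gamma_0\cap\st_\Delta(u)\bigr),
$$
which vanishes exactly when $\Gamma_0\cap\st_\Delta(u)$ is acyclic --- a condition the hypotheses do not supply and for which you offer no selection argument for a good vertex $u$. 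So the relative hypothesis is not preserved by the proposed recursion.

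The remaining pieces do not rescue this: step (ii) (no vertex outside $\Gamma_0$) is only gestured at, and the Hall's-theorem alternative proves just the top level while conceding that the cross-level compatibility --- which is the entire difficulty, and the reason Stanley's and Duval's proofs work globally with the chain complex or with algebraic shifting rather than greedily --- remains open. (A minor inaccuracy: in a triangulated dunce hat the link of the identified vertex is not a circle, though your larger point that one cannot simply hunt for a vertex with acyclic link is fair.) In short, this is a reasonable plan with a correct first reduction, but the proof of the theorem is not there: the relative statement and the preservation of relative acyclicity are asserted as hopes, and the one step you do spell out breaks at the second iteration.
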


This theorem was generalized by Stanley \cite[Proposition 2.1]{St93} and Duval \cite[Theorem 1.1]{Du94}. Stanley further conjectured \cite[Conjecture 2.4]{St93} that complexes with a higher notion of acyclicity possess similar decompositions into boolean intervals of higher rank.

\begin{definition}\label{kFoldDef}
A simplicial complex is \emph{\(k\)-fold acyclic} if $\link_\Delta \sigma$ is acyclic (over a field $\Bbbk$) for all $\sigma \in \Delta$ such that $\abs{\sigma} < k$.
\end{definition}

Observe that $1$-fold acyclicity is equivalent to acyclicity, so the following conjecture seeks to extend Theorem~\ref{StanleyTheorem}.

\begin{conjecture}\label{DiamondConjecture}\cite[Conjecture 2.4]{St93}
Let $\Delta$ be a \(k\)-fold acyclic simplicial complex. Then $\Delta$ can be decomposed into disjoint rank $k$ boolean intervals, the minimal faces of which together form a subcomplex.
\end{conjecture}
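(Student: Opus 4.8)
The natural line of attack is induction, built on the observation that $k$-fold acyclicity passes to vertex links: if $\Delta$ is $k$-fold acyclic then $\link_\Delta v$ is $(k-1)$-fold acyclic for every vertex $v$. One would take Theorem~\ref{StanleyTheorem} as the base case $k=1$ and try to produce a rank-$k$ decomposition of $\Delta$ from rank-$(k-1)$ decompositions of its vertex links. Concretely, fix a vertex $v$; by induction the face poset of $\link_\Delta v$ splits into disjoint rank-$(k-1)$ boolean intervals $[\tau,\tau\cup e]$ whose minimal faces form a subcomplex $\Gamma_v$ of $\link_\Delta v$. Replacing each such interval by $[\tau,\tau\cup e\cup v]$ gives rank-$k$ boolean intervals of $\Delta$, and a short check shows these exactly cover the closed star of $v$; their minimal faces are again $\Gamma_v$, now regarded inside $\Delta$. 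It remains to decompose the leftover faces and to reconcile everything into one global decomposition whose minimal faces form a subcomplex.

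This is where the argument for $k=1$ refuses to generalize, for two reasons. First, the faces not covered after coning up $\link_\Delta v$ are exactly the antistar of $v$ with the subcomplex $\link_\Delta v$ deleted; this is not itself a subcomplex (it does not even contain $\emptyset$), and, worse, for a neighbor $w$ of $v$ the link of $w$ in that antistar --- namely $\link_\Delta w$ with the open star of $v$ removed --- need not be acyclic, so the recursion cannot simply be restarted on the antistar $\set{\sigma\in\Delta : v\notin\sigma}$. Second, the global requirement that \emph{all} the interval minima form a subcomplex is not something that can be enforced one vertex at a time: the subcomplexes $\Gamma_v$ coming from different vertices need not be compatible. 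In the $k=1$ case both difficulties evaporate --- there the only homological input needed is a single Mayer--Vietoris relation around $\link_\Delta v$ --- and the fact that they genuinely obstruct the natural induction is strong evidence that Conjecture~\ref{DiamondConjecture} is false.

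Accordingly, the realistic plan is to construct a counterexample, and the analysis above says where to look: a $k$-fold acyclic complex --- most economically with $k=2$, so that $\Delta$ itself and all of its vertex links are $\Bbbk$-acyclic --- whose antistars are ``topologically rigid'' enough to block every rank-$2$ interval decomposition. Some necessary conditions are immediate: a rank-$k$ interval has $2^k$ faces and the intervals partition $\Delta$, so $2^k\mid\abs{\Delta}$ (counting the empty face) and the number of intervals is $\abs{\Delta}/2^k$; one can also hope to extract a parity obstruction from the subcomplex constraint on minima, for instance a count of faces in some link or star forced to be odd. I would then (i) assemble candidate $2$-fold acyclic complexes by gluing $\Bbbk$-acyclic pieces along acyclic subcomplexes in a way tuned to keep every vertex link acyclic, and (ii) certify that no valid decomposition exists, either through such an invariant or, lacking a clean one, by an exhaustive backtracking search over interval partitions of the face poset. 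Step (ii) is the real obstacle: ruling out \emph{every} decomposition by hand forces one to isolate the correct combinatorial obstruction, so the bulk of the work is in finding a small, highly structured example in which that obstruction is plainly visible.
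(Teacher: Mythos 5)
You are right that the conjecture is false, right that the economical setting is $k=2$ (in fact the paper shows the counterexample must be at least $3$-dimensional, since the conjecture holds when $k=\dim\Delta$, Corollary~\ref{dCor}), and right that the way to preserve $2$-fold acyclicity is to glue acyclic pieces along acyclic subcomplexes --- that is exactly the Mayer--Vietoris argument of Lemma~\ref{AcycGlueProp}. But as it stands your proposal is a research plan, not a resolution of the statement: it neither exhibits a counterexample nor supplies a mechanism for certifying that \emph{no} rank-$2$ boolean interval decomposition exists, and you yourself flag step (ii) as the real obstacle. The divisibility condition $2^k\mid\abs{\Delta}$ cannot serve as the obstruction, since it is automatic for every $k$-fold acyclic complex (the $f$-polynomial is divisible by $(1+t)^k$ by Stanley's Proposition~2.3), and no parity-type invariant is known to do the job.

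The paper closes precisely this gap with two ideas you are missing. First, a reduction to \emph{relative} complexes: Theorem~\ref{GoodAcycUnicornSoup} shows that if one finds a pair $(\Delta,\Gamma)$ with $\Delta$, $\Gamma$ both $k$-fold acyclic, $\Gamma$ induced in $\Delta$, and the relative complex $\Phi=(\Delta,\Gamma)$ not decomposable into rank-$k$ intervals, then gluing $N>\ell/2^k$ copies of $\Delta$ along $\Gamma$ yields a genuine $k$-fold acyclic complex $\Omega_N$ with no rank-$k$ decomposition --- the point being a pigeonhole count: each copy of $\Phi$ that fails to decompose on its own must borrow at least $2^k$ faces of $\Gamma$, and there are not enough to go around. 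This replaces your feared exhaustive search over all decompositions of a large complex by a finite check on a small relative complex. Second, the explicit construction: the relative complex $\Psi=(\Sigma,\Upsilon)$ of Example~\ref{RelativeExample} is $2$-fold acyclic on both sides but not rank-$2$ decomposable; since $\Upsilon$ is not induced in $\Sigma$, the authors embed six copies of $\Sigma$ into a simplicial $3$-ball $\Gamma$ (glued along copies of $\Upsilon$ built from the triangle pairs \eqref{TrianglePairs} and \eqref{FourTriangles}) to get a pair $(\Delta,\Gamma)$ meeting all hypotheses, and Theorem~\ref{Not2Decomposable} certifies non-decomposability of $(\Delta,\Gamma)$ by a short face count: the six disjoint, incomparable copies of $\Psi$ would need $24$ extra faces but only $16$ are available. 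Your instinct about where the difficulty lies was sound, but without the relative-complex/pigeonhole reduction and an explicit pair $(\Delta,\Gamma)$ with a verifiable obstruction, the argument does not yet refute Conjecture~\ref{DiamondConjecture}.
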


Conjecture~\ref{DiamondConjecture} also appears as \cite[Problem 27]{St96} and \cite[Problem 5]{St99}.

Central to this problem is the $f$-polynomial $f(\Delta,t)$ of a $d$-dimensional simplicial complex
$$
f(\Delta,t) = \sum_{\sigma \in \Delta} t^{\abs{\sigma}} = f_{-1} + f_0 t + f_1 t^2 + \dots + f_d t^{d+1}
$$
where $f_i = f_i(\Delta)$ is the number of $i$-dimensional faces of $\Delta$. 
Theorem~\ref{StanleyTheorem} shows that if $\Delta$ is acyclic, then $f(\Delta,t) = (1+t)f(\Gamma,t)$ where $\Gamma$ is a subcomplex. Earlier, Kalai \cite{Ka85} showed this equality holds for some complex \(\Gamma\), which is not necessarily a subcomplex of \(\Delta\). Using results from Kalai's algebraic shifting \cite{Ka00}, Stanley \cite[Proposition 2.3]{St93} further showed that the $f$-polynomial of a \(k\)-fold acyclic complex can be written as
\begin{align}\label{kFoldfPoly}
f(\Delta,t) = (1+t)^kf(\Gamma,t)
\end{align}
for some complex $\Gamma$ (which is not necessarily a subcomplex of $\Delta$). If Conjecture~\ref{DiamondConjecture} were true, it would provide a combinatorial witness for this $\Gamma$. We provide an explicit counterexample to Conjecture~\ref{DiamondConjecture}. We also prove a weaker version of the original conjecture in Theorem~\ref{AcycBoolTreeThm}.

In Section~\ref{background}, we review definitions and relevant background material. In Section~\ref{counterexample}, we describe our construction of the counterexample to Conjecture~\ref{DiamondConjecture}, which relies on reducing the problem to relative complexes and follows ideas similar to those recently developed by Duval et al.~\cite{DG16} and Juhnke-Kubitzke and Venturello \cite{JV18}. In Section~\ref{bennet}, we prove a weaker version of Conjecture~\ref{DiamondConjecture}, replacing boolean intervals with boolean trees. This result gives a witness for the behavior observed in \eqref{kFoldfPoly}. In Section~\ref{dimfold}, we show that Conjecture~\ref{DiamondConjecture} holds when $k$ is equal to the dimension of the complex. We end with a section of open questions.

\section{Preliminaries}\label{background}

We let $[n]$ denote the set $\{1,\dots,n\}$. A \emph{simplicial complex} $\Delta$ on $[n]$ is a subset of $2^{[n]}$ such that if $\sigma \in \Delta$ and $\tau \subseteq \sigma$, then $\tau \in \Delta$. The elements of $\Delta$ are \emph{faces}, and maximal faces are \emph{facets}. If $F_1,\dots,F_j$ are the facets of $\Delta$, we will often write $\Delta = \ideal{F_1,\dots,F_j}$, since the facets uniquely determine \(\Delta\). The \emph{dimension} of a face $\sigma$ is $\dim \sigma = \abs{\sigma}-1$ and the dimension of $\Delta$ is $\dim \Delta = \max\{ \dim \sigma \mid \sigma \in \Delta \}.$ A complex is \emph{pure} if all facets have the same dimension. For a pure complex, a \emph{ridge} is a face of one dimension lower than the facets. Unless otherwise specified, we assume throughout that $\dim \Delta = d.$

A \emph{subcomplex} of $\Delta$ is a simplicial complex $\Gamma$ such that $\Gamma \subseteq \Delta$. If $W \subseteq [n]$, then the \emph{induced subcomplex} on $W$ is $\Delta|_W := \{ \sigma \in \Delta ~\mid~ \sigma \subseteq W \}.$  Given a face $\sigma \in \Delta,$ the \emph{link} of $\sigma$ in $\Delta$ is
\begin{align*}
\link_\Delta \sigma &= \{ \tau \in \Delta \mid \tau \cup \sigma \in \Delta  , \tau \cap \sigma = \varnothing\},
\end{align*}
which we will often denote as $\link \sigma$ if there is no possibility of confusion. Given two complexes $\Delta_1$ and $\Delta_2$ on disjoint vertex sets, their \emph{join} is $\Delta_1 \star \Delta_2 = \{ \sigma_1 \cup \sigma_2 \mid \sigma_1 \in \Delta_1, \sigma_2 \in \Delta_2 \}$. If $\Delta_1$ is a $(k-1)$-simplex, then this join is the \emph{$k$-fold cone} of $\Delta_2$.

A complex is \emph{acyclic} (over a field $\Bbbk$) if all of its reduced homology groups are trivial. Recalling Definition~\ref{kFoldDef}, we note that $1$-fold acyclicity is equivalent to acyclicity, so Theorem~\ref{StanleyTheorem} is the $k=1$ case of Conjecture~\ref{DiamondConjecture}. When $k>1$, $k$-fold acyclicity is not a topological property. For example, the $d$-simplex is $(d+1)$-fold acyclic, but its barycentric subdivision is not $k$-fold acyclic for $k>1$.

The construction of our counterexample relies on \emph{relative simplicial complexes}; given a simplicial complex $\Delta$ and a subcomplex $\Gamma$, the relative complex $\Phi = (\Delta, \Gamma)$ is the set of all of the faces of $\Delta$ that are not faces of $\Gamma$.

Given a poset $P$ and two elements $x,y \in P$, the \emph{interval} from $x$ to $y$ is $[x,y] = \{ z \in P \mid x \le z \le y \}.$ If $[x,y] = \set{x,y}$, then we say that $y$ \emph{covers} $x$. An interval $I$ is a \emph{rank $k$ boolean interval} if $I \cong 2^{[k]}$. A \emph{boolean interval decomposition} of $P$ is a collection $\mathcal{B}$ of disjoint boolean intervals in $P$ such that
$$
P = \bigsqcup_{I \in \mathcal{B}} I.
$$
Such a decomposition is a \emph{rank $k$ boolean interval decomposition} if all intervals in the decomposition are of rank $k$. We also refer to this as a rank \(k\) boolean decomposition.

\begin{definition}
A \emph{boolean tree} of rank $i$ is a subposet $T_i$ of a poset $P$, that has a unique minimal element $r$, and is defined recursively as follows. Any subposet with exactly one element is a boolean tree of rank 0. Now assume $T_1$ and $T_2$ are two disjoint boolean trees of rank $(i - 1)$, each with minimal elements $r_1$ and $r_2$ respectively, such that $r_2$ covers $r_1$ in $P$. Then $T_1 \cup T_2$ is a boolean tree of rank $i$, with  $r_1$ as its unique minimal element.
\end{definition}

A \emph{(rank $k$) boolean tree decomposition} of a poset is defined the same as a (rank $k$) boolean interval decomposition, except that boolean intervals are replaced with boolean trees. When the rank is \(1\), both boolean trees and boolean intervals are simply described by a single covering relation, matching Stanley's original theorem statement \cite[Theorem 1.2]{St93}.

\begin{definition}\label{StackedDef}
A simplicial complex $\Delta$ is a \emph{stacked simplicial complex} if $\Delta$ is pure of dimension \(d\) with a facet order $F_1,\dots,F_j$ such that for each \(i \in [j-1]\), $\ideal{F_1,\dots,F_i}\cap \ideal{F_{i+1}}$ is a \((d-1)\)-simplex. Such an order is known as a \emph{stacked shelling}.
\end{definition}

Another characterization of stacked complexes, equivalent to Definition~\ref{StackedDef}, is that $\Delta$ is stacked if and only if the minimal new face added by \(F_i\) is a vertex, for \(i \neq 1\). That is, a pure complex $\Delta$ is stacked if and only if there exists an order on its facets $F_1,\dots,F_j$ such that for each \(i \in [j-1]\), $\ideal{F_{i+1}} \setminus \ideal{F_1,\dots,F_i} = [v_{i+1},F_{i+1}]$ for some vertex $v_{i+1}$. We note that stacked complexes are Cohen--Macaulay quasi-forests (see, e.g., \cite[Section~9.2.3]{HH}).

\section{Construction}\label{counterexample}

To construct our counterexample, we first show that we can reduce the problem to finding a relative complex $(\Delta,\Gamma)$ with appropriate properties. Then we construct a relative pair $(\Delta,\Gamma)$ that meets the necessary requirements and use it to construct the counterexample to Conjecture~\ref{DiamondConjecture}. This technique of reducing to a relative complex has been used successfully by Duval et al. \cite{DG16} and Juhnke-Kubitzke and Venturello \cite{JV18}.

\begin{lemma}\label{AcycGlueProp}
Let $\Delta_1$ and $\Delta_2$ be simplicial complexes such that $\Delta_1$ is $j$-fold acyclic, $\Delta_2$ is $k$-fold acyclic, and $\Delta_1 \cap \Delta_2$ is $\ell$-fold acyclic. Then $\Delta_1 \cup \Delta_2$ is $m$-fold acyclic, where $m = \min\set{j,k,\ell}$.
\end{lemma}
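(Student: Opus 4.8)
The plan is to prove this by induction on $m = \min\{j,k,\ell\}$ and to reduce everything to a Mayer--Vietoris argument. First consider the base case. I would unpack the definition: $\Delta_1 \cup \Delta_2$ is $m$-fold acyclic means $\link_{\Delta_1 \cup \Delta_2} \sigma$ is acyclic for every face $\sigma$ with $|\sigma| < m$. Fix such a $\sigma$. The key observation is that the link operation commutes with unions and intersections in the expected way: $\link_{\Delta_1 \cup \Delta_2} \sigma = \link_{\Delta_1} \sigma \cup \link_{\Delta_2} \sigma$ whenever $\sigma$ is a face of both (with the convention that $\link_{\Delta_i}\sigma$ is empty, or rather that this reduces to the other complex, if $\sigma \notin \Delta_i$), and $\link_{\Delta_1}\sigma \cap \link_{\Delta_2}\sigma = \link_{\Delta_1 \cap \Delta_2}\sigma$. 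So it suffices to show that the union of two acyclic complexes whose intersection is acyclic is itself acyclic; this is the standard reduced Mayer--Vietoris long exact sequence
$$
\cdots \to \tilde H_i(\link_{\Delta_1}\sigma) \oplus \tilde H_i(\link_{\Delta_2}\sigma) \to \tilde H_i(\link_{\Delta_1 \cup \Delta_2}\sigma) \to \tilde H_{i-1}(\link_{\Delta_1 \cap \Delta_2}\sigma) \to \cdots
$$
where the outer terms vanish by hypothesis (since $|\sigma| < m \le j,k,\ell$, all three relevant links are acyclic), forcing the middle term to vanish.

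There is a bookkeeping subtlety I would be careful with: when $\sigma \in \Delta_1$ but $\sigma \notin \Delta_2$, we only know $\link_{\Delta_1}\sigma$ is acyclic, and we need $\link_{\Delta_1 \cup \Delta_2}\sigma = \link_{\Delta_1}\sigma$ in this case, which is immediate since no face of $\Delta_2$ contains $\sigma$. The symmetric case is identical, and if $\sigma$ is in neither complex its link is empty and there is nothing to check. So the only genuinely interesting case is $\sigma \in \Delta_1 \cap \Delta_2$, handled by Mayer--Vietoris above. One should also double-check the edge cases of the Mayer--Vietoris sequence (the low-degree terms involving $\tilde H_{-1}$ and $\tilde H_0$) to confirm the argument still forces acyclicity there, using that acyclic complexes are nonempty and connected.

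Strictly speaking no induction is even needed once the link identities are in hand — the statement for a fixed $\sigma$ follows directly from Mayer--Vietoris applied to the three links, and then one just quantifies over all $\sigma$ with $|\sigma| < m$. I expect the main obstacle to be purely notational rather than mathematical: stating the link-of-a-union identity cleanly across all the cases ($\sigma$ in one, both, or neither of the $\Delta_i$), and making sure the empty complex / irrelevant complex conventions are consistent so that the Mayer--Vietoris sequence is being applied to an honest decomposition. Once that is set up, the homological input is entirely standard.
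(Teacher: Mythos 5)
Your proposal is correct and follows essentially the same route as the paper's proof: split into the cases $\sigma \in \Delta_1 \setminus \Delta_2$, $\sigma \in \Delta_2 \setminus \Delta_1$, and $\sigma \in \Delta_1 \cap \Delta_2$, use the identities $\link_{\Delta_1 \cup \Delta_2}\sigma = \link_{\Delta_1}\sigma \cup \link_{\Delta_2}\sigma$ and $\link_{\Delta_1 \cap \Delta_2}\sigma = \link_{\Delta_1}\sigma \cap \link_{\Delta_2}\sigma$, and conclude by the reduced Mayer--Vietoris sequence. As you note yourself, the induction framing is unnecessary; the direct argument you give is exactly what the paper does.
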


\begin{proof} 
Let $\abs{\sigma} < m$ and assume $\sigma \in \Delta_1 \cup \Delta_2$. If $\sigma \in \Delta_1 \setminus \Delta_2,$ then $\link_{\parens{\Delta_1 \cup \Delta_2}} \sigma = \link_{\Delta_1} \sigma$ and thus $\link_{\parens{\Delta_1 \cup \Delta_2}} \sigma$ is acyclic. The same holds if $\sigma \in \Delta_2 \setminus \Delta_1.$

If instead $\sigma \in \Delta_1 \cap \Delta_2$, then we note that $\link_{\Delta_1 \cup \Delta_2} \sigma = \link_{\Delta_1} \sigma \cup \link_{\Delta_2} \sigma$ and similarly $\link_{\Delta_1 \cap \Delta_2} \sigma = \link_{\Delta_1} \sigma \cap \link_{\Delta_2} \sigma$. We then have the Mayer-Vietoris sequence
$$
\dots \to \tilde{H}_i(\link_{\Delta_1} \sigma ) \oplus \tilde{H}_i(\link_{\Delta_2} \sigma ) \to \tilde{H}_i(\link_{\Delta_1 \cup \Delta_2} \sigma) \to \tilde{H}_{i-1}(\link_{\Delta_1 \cap \Delta_2} \sigma ) \to \dots .
$$
Since $\Delta_1$, $\Delta_2$, and $\Delta_1 \cap \Delta_2$ are $m$-fold acyclic, the homology groups of the links of $\sigma$ in each of these complexes vanish since  $\abs{\sigma} < m$. This implies that $\tilde{H}_i(\link_{\Delta_1 \cup \Delta_2} \sigma) = 0$ for all $i$.  Therefore $\Delta_1 \cup \Delta_2$ is $m$-fold acyclic.
\end{proof}

Lemma~\ref{AcycGlueProp} is used to preserve $k$-fold acyclicity in the following theorem.

\begin{theorem}\label{GoodAcycUnicornSoup}
Let $\Phi = (\Delta,\Gamma)$ be a relative complex such that
\begin{enumerate}
\item $\Delta$ and $\Gamma$ are $k$-fold acyclic;
\item $\Gamma$ is an induced subcomplex of $\Delta$; and
\item $\Phi$ cannot be written as a disjoint union of rank $k$ boolean intervals.
\end{enumerate}
Let $\ell$ be the total number of faces of $\Gamma$ and let $N>\ell/2^k$. If $\Omega = \Omega_N$ is the complex formed by gluing $N$ copies of $\Delta$ together along $\Gamma$, then $\Omega$ is a $k$-fold acyclic complex that cannot be written as a disjoint union of rank $k$ boolean intervals.
\end{theorem}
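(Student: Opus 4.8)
The theorem makes two assertions about $\Omega$, which I would establish in turn.

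That $\Omega$ is $k$-fold acyclic is a short induction on the number of glued copies. Write $\Omega_i$ for the union of the first $i$ copies of $\Delta$. One observes that $\Omega_i$ and the next copy of $\Delta$ meet exactly in $\Gamma$: a face lying in two distinct copies of $\Delta$ uses none of the ``private'' vertices of either copy and hence lies in $\Gamma$, while $\Gamma$ lies in every copy. Since $\Omega_i$ is $k$-fold acyclic by the inductive hypothesis, each copy of $\Delta$ is $k$-fold acyclic by (1), and $\Gamma$ is $k$-fold acyclic by (1), Lemma \ref{AcycGlueProp} shows that $\Omega_{i+1}$ is $k$-fold acyclic. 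The base case $\Omega_1 = \Delta$ is immediate, so $\Omega = \Omega_N$ is $k$-fold acyclic.

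For the second assertion I would argue by contradiction: suppose $\mathcal{B}$ is a rank $k$ boolean interval decomposition of $\Omega$. The vertex set of $\Omega$ partitions as $V_0 \sqcup V_1 \sqcup \dots \sqcup V_N$, where $V_0$ is the vertex set of $\Gamma$ and $V_j$ (for $j \ge 1$) is the set of private vertices of the $j$-th copy of $\Delta$. Every face of $\Omega$ lies in a single copy of $\Delta$, so either it is supported on $V_0$ --- then it lies in $\Gamma$ --- or it meets exactly one set $V_j$ with $j \ge 1$ --- then, since $\Gamma$ is an induced subcomplex by (2), it lies in the corresponding copy $\Phi_j$ of the relative complex $\Phi = (\Delta,\Gamma)$, and $\Phi_j$ is isomorphic to $\Phi$ as a poset. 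For an interval $[x,y] \in \mathcal{B}$, the relation $x \subseteq y$ forces $y$ to meet every $V_j$ that $x$ meets, so $\mathcal{B}$ contains only three kinds of intervals: those contained in $\Gamma$, those contained in some $\Phi_j$, and ``straddling'' ones with $x \in \Gamma$ and $y \in \Phi_j$. For a straddling interval, the faces lying in $\Gamma$ are exactly those contained in $y \cap V_0$; they form the subinterval $[x,\, y \cap V_0]$, which is nonempty as it contains $x$.

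Let $\mathcal{B}_j$ denote the set of intervals of $\mathcal{B}$ whose top lies in $\Phi_j$. Every face of $\Phi_j$ is covered by an interval whose top also lies in $\Phi_j$ (if $z \in \Phi_j$ lies in $[x,y] \in \mathcal{B}$, then $y \supseteq z$ meets $V_j$, so $y \in \Phi_j$), while every face covered by an interval of $\mathcal{B}_j$ lies in $\Phi_j \cup \Gamma$. Hence $\mathcal{B}_j$ covers $\Phi_j$ together with a subset $\Gamma_j \subseteq \Gamma$, and these subsets $\Gamma_j$ are pairwise disjoint since $\mathcal{B}$ is a disjoint decomposition. Put $\gamma_j = \abs{\Gamma_j}$. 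Counting faces yields $2^k \abs{\mathcal{B}_j} = \abs{\Phi_j} + \gamma_j = \abs{\Phi} + \gamma_j$. By Stanley's $f$-polynomial result for $k$-fold acyclic complexes (\cite[Proposition 2.3]{St93}, recalled in the introduction), $2^k$ divides both $f(\Delta,1)$ and $f(\Gamma,1)$, hence $2^k$ divides $\abs{\Phi} = f(\Delta,1) - \ell$; therefore $2^k \mid \gamma_j$ for every $j$. If $\gamma_j = 0$ for some $j$, then $\mathcal{B}_j$ contains no straddling interval and is therefore a rank $k$ boolean interval decomposition of $\Phi_j \cong \Phi$, contradicting (3). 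Hence $\gamma_j \ge 2^k$ for every $j$, and the disjointness of the $\Gamma_j$ gives $N \cdot 2^k \le \sum_{j=1}^{N} \gamma_j \le \ell$, contradicting the hypothesis $N > \ell / 2^k$.

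I expect the main obstacle to be the structural bookkeeping above: one must check carefully that being an induced subcomplex forces the face poset of $\Omega$ to split as $\Gamma$ together with $N$ disjoint copies of $\Phi$, with no interval of $\mathcal{B}$ joining two distinct copies. Once that structure is in hand, the decisive observation is the divisibility $2^k \mid \gamma_j$, which upgrades the cheap bound ``each copy forces at least one face of $\Gamma$ to be used by a straddling interval'' to ``at least $2^k$ such faces'', precisely matching the threshold $N > \ell / 2^k$. The remaining steps are routine counting.
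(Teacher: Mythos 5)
Your proposal is correct and follows essentially the same route as the paper: Lemma \ref{AcycGlueProp} gives $k$-fold acyclicity of the glued complex, and the non-decomposability argument uses the $(1+t)^k$-divisibility of the $f$-polynomials of $\Delta$ and $\Gamma$ to force each copy of $\Phi$ to absorb at least $2^k$ faces of $\Gamma$, then counts against the $\ell$ available faces (the paper phrases this last step as a pigeonhole, you as a direct sum $N\cdot 2^k \le \ell$). Your write-up just makes the structural bookkeeping (the splitting of the face poset of $\Omega$ and the classification of intervals) more explicit than the paper does.
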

\begin{proof}

Since $\Gamma$ is an induced subcomplex of $\Delta$, gluing copies of $\Delta$ together along $\Gamma$ will result in a simplicial complex. By Lemma~\ref{AcycGlueProp}, this resulting complex $\Omega$ is $k$-fold acyclic. 
The face poset of $\Omega$ is precisely $N$ disjoint copies of $\Phi$ and one copy of $\Gamma$. We note that there are at most $ \ell/2^k$ disjoint rank \(k\) boolean intervals in $\Gamma$. 

Since each of \(\Delta\) and \(\Gamma\) is \(k\)-fold acyclic, their $f$-polynomials are each divisible by $(1+t)^k$. This implies that the $f$-polynomial of \(\Phi\) is \(p(t)(1+t)^k\), for some polynomial \(p(t)\). Therefore \(\Phi\) has \(p(1)2^k\) many faces. Since $\Phi$ cannot be decomposed using only $p(1)$ boolean intervals, a collection of disjoint rank \(k\) boolean intervals in the face poset of \(\Omega\) which covers \(\Phi\) must consist of \(b>p(1)\) intervals. Such a collection must also contain at least \(2^k\) faces of \(\Omega\) which are not in \(\Phi\). Since the copies of \(\Phi\) in \(\Omega\) are incomparable, these faces must be in \(\Gamma\).

There are at most $\ell/2^k$ mutually disjoint collections of \(2^k\) faces in $\Gamma$, and there are $N>\ell/2^k$ copies of $\Phi$ in $\Omega.$ By the pigeonhole principle, any rank $k$ boolean decomposition of $\Omega$ must contain some \(\Phi\) which is decomposed into disjoint rank \(k\) boolean intervals. This is a contradiction, so $\Omega$ is not rank $k$ boolean decomposable.
\end{proof}

We now start the construction of our counterexample, beginning with the following relative complex $\Psi$, which is isomorphic to the complex in \cite[Remark 3.6]{DG16}. We have shortened the notation for faces so instead of writing $\set{1,2,3,4}$ we write $1234$, for example.

\begin{example}\label{RelativeExample}
Let $\Sigma$ and $\Upsilon$ be the following simplicial complexes and let \(\Psi\) be the relative complex between them.
\begin{align*}
\Sigma &= \ideal{1234,1235,2345,2456,3456}, \\
\Upsilon &= \ideal{125,124,246,346}, \\
\Psi &= (\Sigma,\Upsilon).
\end{align*}
Both $\Sigma$ and $\Upsilon$ are $2$-fold acyclic
and the face poset of $\Psi$ cannot be decomposed into disjoint rank 2 boolean intervals. The face poset of \(\Psi\) appears in Figure~\ref{psiposet} for the reader to verify this.

\begin{figure}[t]
\begin{center}
\begin{tikzpicture}
\node[circle,draw=black, fill=white, inner sep=1pt,minimum size=5pt] at (1,4) (1234){$1234$};
\node[circle,draw=black, fill=white, inner sep=1pt,minimum size=5pt] at (2.75,4) (1235){$1235$};
\node[circle,draw=black, fill=white, inner sep=1pt,minimum size=5pt] at (4.5,4) (2345){$2345$};
\node[circle,draw=black, fill=white, inner sep=1pt,minimum size=5pt] at (8,4) (2456){$2456$};
\node[circle,draw=black, fill=white, inner sep=1pt,minimum size=5pt] at (6.25,4) (3456){$3456$};
\node[circle,draw=black, fill=white, inner sep=1pt,minimum size=5pt] at (1,2) (123){$123$};
\node[circle,draw=black, fill=white, inner sep=1pt,minimum size=5pt] at (0,2) (134){$134$};
\node[circle,draw=black, fill=white, inner sep=1pt,minimum size=5pt] at (3,2) (234){$234$};
\node[circle,draw=black, fill=white, inner sep=1pt,minimum size=5pt] at (2,2) (135){$135$};
\node[circle,draw=black, fill=white, inner sep=1pt,minimum size=5pt] at (4,2) (235){$235$};
\node[circle,draw=black, fill=white, inner sep=1pt,minimum size=5pt] at (6,2) (245){$245$};
\node[circle,draw=black, fill=white, inner sep=1pt,minimum size=5pt] at (5,2) (345){$345$};
\node[circle,draw=black, fill=white, inner sep=1pt,minimum size=5pt] at (9,2) (256){$256$};
\node[circle,draw=black, fill=white, inner sep=1pt,minimum size=5pt] at (8,2) (456){$456$};
\node[circle,draw=black, fill=white, inner sep=1pt,minimum size=5pt] at (7,2) (356){$356$};
\node[circle,draw=black, fill=white, inner sep=1pt,minimum size=5pt] at (1,0) (13){$13$};
\node[circle,draw=black, fill=white, inner sep=1pt,minimum size=5pt] at (8,0) (56){$56$};
\node[circle,draw=black, fill=white, inner sep=1pt,minimum size=5pt] at (2.75,0) (23){$23$};
\node[circle,draw=black, fill=white, inner sep=1pt,minimum size=5pt] at (6.25,0) (45){$45$};
\node[circle,draw=black, fill=white, inner sep=1pt,minimum size=5pt] at (4.5,0) (35){$35$};
\draw (1234) -- (123) -- (13) -- (134) -- (1234) -- (234) -- (23) -- (235) -- (2345) -- (245) -- (45) -- (345) -- (3456) -- (456) -- (45);
\draw (1235) -- (135) -- (35) -- (345) -- (2345) -- (234);
\draw (23) -- (123) -- (1235) -- (235) -- (35) -- (356);
\draw (456) -- (56) -- (356) -- (3456);
\draw (56) -- (256) -- (2456) -- (245);
\draw (13) -- (135);
\draw (2456) --(456);
\end{tikzpicture}
\end{center}
\caption{The face poset of \(\Psi = (\Sigma, \Upsilon)\) in Example~\ref{RelativeExample}.}\label{psiposet}
\end{figure}

\end{example}

Since $\Upsilon$ is not an induced subcomplex of $\Sigma$, we cannot immediately apply Theorem~\ref{GoodAcycUnicornSoup} to produce a counterexample to Conjecture~\ref{DiamondConjecture}. However, this complex is the foundation of our counterexample and will be referred to repeatedly in our construction.

\begin{figure}
\begin{center}
\begin{tikzpicture}

\definecolor{norange}{rgb}{0.898, 0.621, 0.0}
\definecolor{skyblue}{rgb}{0.336, 0.703, 0.910}
\definecolor{bluishgreen}{rgb}{0, 0.617, 0.449}
\definecolor{nyellow}{rgb}{0.937, 0.890, 0.258}
\definecolor{nblue}{rgb}{0, 0.445, 0.695}
\definecolor{nred}{rgb}{0.832, 0.367, 0}
\definecolor{npurple}{rgb}{0.797, 0.473, 0.652}

\node[coordinate] at (0,0) (A){};
\node[coordinate] at (0,1) (B){};
\node[coordinate] at (4,-2) (C){};
\node[coordinate] at (4,-1) (D){};
\node[coordinate] at (-3,-2) (E){};
\node[coordinate] at (-3,-1) (F){};
\node[coordinate] at (1,3) (G){};
\node[coordinate] at (1,4) (H){};
\draw[fill=nred] (E) -- (F) -- (G) -- cycle;
\draw[fill=nred] (F) -- (G) -- (H) -- cycle;
\draw[fill=nblue] (C) -- (D) -- (H) -- cycle;
\draw[fill=nblue] (C) -- (G) -- (H) -- cycle;
\draw[fill=npurple] (A) -- (B) -- (G) -- cycle;
\draw[fill=npurple] (H) -- (B) -- (G) -- cycle;
\draw[fill=bluishgreen] (A) -- (B) -- (C) -- cycle;
\draw[fill=bluishgreen] (D) -- (B) -- (C) -- cycle;
\draw[fill=skyblue] (A) -- (B) -- (E) -- cycle;
\draw[fill=skyblue] (F) -- (B) -- (E) -- cycle;
\draw[fill=nyellow] (C) -- (D) -- (F) -- cycle;
\draw[fill=nyellow] (C) -- (E) -- (F) -- cycle;
\node[vertex] at (A) {};
\node[vertex] at (B) {};
\node[vertex] at (C) {};
\node[vertex] at (D) {};
\node[vertex] at (E) {};
\node[vertex] at (F) {};
\node[vertex] at (G) {};
\node[vertex] at (H) {};
\node[below] at (A) {$A$};
\node[above left=-1.5pt and -1.5pt] at (B) {$B$};
\node[right] at (C) {$C$};
\node[right] at (D) {$D$};
\node[left] at (E) {$E$};
\node[left] at (F) {$F$};
\node[below right=5pt and -6pt] at (G) {$G$};
\node[above] at (H) {$H$};
\end{tikzpicture}
\end{center}
\caption{The triangles listed in \eqref{TrianglePairs}. Each hollow triangular prism that appears is filled in by three facets of \(\Gamma\).}\label{fig:Triangles}
\end{figure}

Our goal is to create a new pair $(\Delta,\Gamma)$  that meets 
the conditions of Theorem~\ref{GoodAcycUnicornSoup}. 
 We now consider the following complex, $\Gamma$.
\[
\Gamma = \left< ABCE, BCEF, BCDF, ABCG, BCGH, BCDH, ABEG, BEFG, BFGH \right>
\]
Since $\Gamma$ is a simplicial $3$-ball with no interior vertices, $\Gamma$ is $2$-fold acyclic. Within \(\Gamma\) there are six pairs of triangles which are listed below and depicted in Figure~\ref{fig:Triangles}.
\begin{equation}\label{TrianglePairs}
\begin{matrix}[lll]
\{ABC,BCD\}, & \{ABE,BEF\}, & \{ABG,BGH\}, \\
\{CDF,CEF\}, & \{CDH,CGH\}, & \{EFG,FGH\}.
\end{matrix}
\end{equation}
We now begin the construction of $\Delta$. To each of the edges $AB,$ $CD,$ $EF,$ and $GH$, we add cone points $I,$ $J$, $K$, and $L$ respectively, forming four triangles that are not in \(\Gamma\):
\begin{equation}\label{FourTriangles}
ABI,CDJ,EFK,GHL.
\end{equation}

For any pair of triangles from \eqref{FourTriangles} there is a unique pair of triangles in \eqref{TrianglePairs} so that the four triangles together form a complex isomorphic to \(\Upsilon\) from Example~\ref{RelativeExample}. For example, the two triangles \(\{ABI, CDJ\}\) from \eqref{FourTriangles} together with \(\{ABC,BCD\}\) from \eqref{TrianglePairs} form a complex isomorphic to \(\Upsilon\). For these four triangles, we will glue a copy of \(\Sigma\) to \(\Gamma\) along this \(\Upsilon\) in the natural way. 

We obtain \(\Delta\) as the result of gluing six copies of \(\Sigma\) to \(\Gamma\) in this way, one for each choice of two triangles from \eqref{FourTriangles}. For clarity, we list all of the facets of \(\Delta\) that are not in \(\Gamma\). 
\begin{equation}\label{OtherFacetsOfDelta}
\begin{matrix}[lllll]
            ABCJ,& ABIJ,& BCIJ,& BCDI,& CDIJ,\\ 
            ABEK,& ABIK,& BEIK,& BEFI,& EFIK,\\ 
            ABGL,& ABIL,& BGIL,& BGHI,& GHIL,\\ 
            CDFK,& CDJK,& CFJK,& CEFJ,& EFJK,\\ 
            CDHL,& CDJL,& CHJL,& CGHJ,& GHJL,\\ 
            EFGL,& EFKL,& FGKL,& FGHK,& GHKL.\\ 
\end{matrix}
\end{equation}
It is straightforward to verify that \(\Delta\) is 2-fold acyclic (see our Sage code \cite{linprog}) and that \(\Gamma\) is an induced subcomplex of \(\Delta\). It only remains to be shown that $(\Delta,\Gamma)$ is not decomposable into rank $2$ boolean intervals; then we can apply Theorem~\ref{GoodAcycUnicornSoup} to construct our counterexample.

\begin{theorem}\label{Not2Decomposable}
 $\Phi = (\Delta,\Gamma)$ is not rank $2$ boolean decomposable.
\end{theorem}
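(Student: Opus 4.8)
The plan is to combine the non‑decomposability of $\Psi$ from Example \ref{RelativeExample} with a counting argument against the six glued copies of $\Sigma$. First I would pin down the shape of a hypothetical rank $2$ boolean decomposition $\mathcal{B}$ of $\Phi$. Since $\Delta$ and $\Gamma$ are $2$‑fold acyclic their $f$‑polynomials are divisible by $(1+t)^2$, so $f(\Phi,t) = (1+t)^2 g(t)$, and a direct computation gives $g(t) = 4t + 30t^2$. A rank $2$ boolean interval with minimal face $\sigma$ contributes $(1+t)^2 t^{|\sigma|}$ to $f(\Phi,t)$, so $\mathcal{B}$ must consist of exactly $4$ intervals whose minimal face is a vertex and $30$ whose minimal face is an edge. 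As the only vertices of $\Phi$ are $I,J,K,L$ (every other vertex of $\Delta$ lies in $\Gamma$), the first four are ``root'' intervals $R_v = [\{v\},T_v]$, $v\in\{I,J,K,L\}$, with $T_v$ a triangle containing $v$, and the remaining $30$ each run from an edge up to a tetrahedron.

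Next I would record the block structure of the face poset. Write $\Sigma_1,\dots,\Sigma_6$ for the six glued copies of $\Sigma$; each $\Sigma_i$ involves two of the new vertices $I,J,K,L$, and for a new vertex $v$ added to an edge $\{a,b\}\in\{AB,CD,EF,GH\}$ put $D_v = \{\{v\},av,bv,abv\}$, a rank $2$ boolean interval. One checks that the $28$ faces of $\Sigma_i$ not lying in $\Gamma$ consist of the $8$ faces of the two relevant diamonds $D_v,D_{v'}$ together with a $20$‑element subset $\Psi_i$ which, as a subposet of $\Phi$, is order‑isomorphic to $\Psi$. Hence the faces of $\Phi$ split into blocks $\Phi = \bigsqcup_{i=1}^{6}\Psi_i \sqcup \bigsqcup_{v\in\{I,J,K,L\}}D_v$. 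The needed incomparabilities are routine: $D_v\not\sim D_w$ and $\Psi_i\not\sim\Psi_j$ for distinct indices (two of the $\Sigma_i$ either have disjoint vertex sets or meet only in the faces of a common $D_v$), and no face of any $\Psi_i$ is a proper subface of a face of any $D_v$. It follows that every interval of $\mathcal{B}$ lies inside a single $\Psi_i$, equals a single $D_v$, or is a \textbf{straddle} meeting exactly one $D_v$ and exactly one $\Psi_i$, in $2+2$ or $1+3$ faces.

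The crux is a local analysis at each diamond $D_v$. The only tetrahedra containing the shared edge $av$ are the three sets $\{a,b,v,w\}$ with $w$ a new vertex other than $v$, and each of these contains the triangle $abv$; from this one shows that at least one of $av,bv$ lies in $R_v$, and that $abv$ lies either in $R_v$ as its top (forcing $R_v = D_v$) or in a straddle of the form $[av\text{ or }bv,\,abvw]$. Tracking how the four faces of $D_v$ get covered yields a dichotomy: either $R_v = D_v$ (call $v$ \textbf{good}), or $R_v$ is a straddle and there is exactly one further straddle through $D_v$, namely $[\,\cdot\,,abvw]$ on the shared edge not used by $R_v$ (call $v$ \textbf{bad}). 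In particular no interval can meet $D_v$ in exactly one or exactly three faces, so $1+3$ straddles never occur; every straddle meets its $\Psi_i$ in exactly $2$ faces, and the total number of straddles is $2b$, where $b\le 4$ is the number of bad vertices.

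Finally I would count within the copies of $\Psi$. Fixing $i$, the faces of $\Psi_i$ are covered by intervals contained in $\Psi_i$ ($4$ faces each) and straddles ($2$ faces each), so $20 = 4t_i + 2s_i$ where $s_i$ is the number of straddles meeting $\Psi_i$; hence $s_i$ is even, and $s_i = 0$ is impossible since it would exhibit a rank $2$ boolean decomposition of $\Psi_i\cong\Psi$, contradicting Example \ref{RelativeExample}. So $s_i\ge 2$ for each of the six copies, giving $\sum_i s_i \ge 12$ straddles, while the previous paragraph bounds the number of straddles by $2b\le 8$ — a contradiction, so no such $\mathcal{B}$ exists. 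The step I expect to be the main obstacle is exactly this local analysis at $D_v$: one must enumerate the ways the four faces of a diamond can be partitioned by intervals, rule out the $1+3$ straddles, and confirm that a bad vertex contributes exactly two straddles (and not more), so that the final inequality $2b \le 8 < 12$ closes the argument.
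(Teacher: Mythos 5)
Your block structure, the incomparability claims, and the parity count forcing each $\Psi_i$ to meet at least two straddling intervals are all sound, but the step you yourself flag as the crux --- the local analysis at a diamond --- is exactly where the argument fails, and it fails as stated rather than merely being unfinished. The claimed dichotomy is false: an interval can meet $D_v$ in exactly one face, and three straddles can meet a single diamond. Concretely, for $D_I=\{I,AI,BI,ABI\}$ the three pairwise disjoint rank $2$ intervals $[AI,ABIJ]=\{AI,ABI,AIJ,ABIJ\}$, $[BI,BCDI]=\{BI,BCI,BDI,BCDI\}$, and $R_I=[I,CIJ]=\{I,CI,IJ,CIJ\}$ cover $D_I$; here neither $AI$ nor $BI$ lies in $R_I$, and two of the three intervals meet $D_I$ in a single face. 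The source of the error is an asymmetry you overlooked: the statement that the only tetrahedra through a diamond edge are the three sets $\{a,b,v,w\}$ is true for the edge $Av$ (e.g.\ $AI$ lies only in $ABIJ$, $ABIK$, $ABIL$) but false for $Bv$ (the edge $BI$ lies in nine tetrahedra, e.g.\ $BCDI$), so the forced placement of $abv$ pins down the interval containing $Av$ but not the one containing $Bv$. With the corrected bound of at most three straddles per diamond, your final inequality becomes $12\le 12$ and no contradiction follows. A repair is possible but needs a finer count separating $2{+}2$ from $1{+}3$ straddles (four three-straddle diamonds produce eight $1{+}3$ straddles, which is incompatible with each $\Psi_i$ meeting exactly two straddles once one reduces the face counts mod $4$); none of that is in your write-up.

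For comparison, the paper's proof avoids the local analysis entirely and is essentially your parity observation applied to faces rather than straddles: the intervals meeting a fixed copy $\Psi_i$ cover $20$ faces of $\Psi_i$ plus a number of outside faces that is divisible by $4$ and positive (positive because $\Psi_i\cong\Psi$ is not rank $2$ decomposable), hence at least $4$; since the six copies are pairwise disjoint and incomparable, no interval meets two copies, so at least $24$ distinct faces outside the copies are required, while only the $16$ faces of the four intervals $[I,ABI],[J,CDJ],[K,EFK],[L,GHL]$ exist. Counting external faces per copy, rather than bounding straddles per diamond, closes the argument in one step and sidesteps the delicate (and, as written, incorrect) diamond dichotomy.
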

The following proof follows a similar structure to the proof of Theorem~\ref{GoodAcycUnicornSoup}.
\begin{proof} 
The face poset of $\Phi$ contains six copies of the face poset of $\Psi$, one for each of the copies of $\Sigma$ that was glued in above. Each of the six copies of $\Psi$ is not rank $2$ boolean decomposable, so each requires at least $4$ additional faces to possibly be rank $2$ boolean decomposable, a total of \(24\) faces. These six copies of $\Psi$ are pairwise disjoint and pairwise incomparable. The only faces of $\Phi$ not contained in a copy of $\Psi$ are the intervals $[I,ABI]$, $[J,CDJ]$, $[K,EFK]$, and $[L,GHL]$.  These \(16\) additional faces are not sufficient for every copy of \(\Psi\) to become rank \(2\) boolean decomposable, so \(\Phi\) is not decomposable into rank $2$ boolean intervals.
\end{proof}

Since $f(\Gamma) = (1, 8, 22, 24, 9)$, Theorem~\ref{GoodAcycUnicornSoup} immediately implies that $\Omega_{17}$ is a counterexample to Conjecture~\ref{DiamondConjecture}, since \(17>64/4\). 

In the proof of Theorem~\ref{Not2Decomposable}, $\Phi$ had only 16 faces that were not included in a copy of $\Psi$ instead of the 24 needed to possibly become rank 2 boolean decomposable. This difference of 8 is greater than the 4 assumed in Theorem~\ref{GoodAcycUnicornSoup}. Furthermore, only the 43 faces of \(\Gamma\) that are below \(\Phi\) can be used to make \(\Phi\) rank 2 boolean decomposable. With these improvements, we know that \(\Omega_6\) is a smaller counterexample, since \(6>43/8\). In fact, we can find an even smaller counterexample.

\begin{remark}\label{SmallCounterexample}
A linear program \cite{linprog} verifies that 
$\Omega = \Omega_3$ is a counterexample to Conjecture~\ref{DiamondConjecture}. The f-polynomial of this counterexample is \( f(\Omega_3,t) = 1+20t+136t^2+216t^3+99t^4 = (1+t)^2(1+18t+99t^2)\). This is the smallest known counterexample to Conjecture~\ref{DiamondConjecture}.
\end{remark}

Considering the above discussion showing that $\Omega_6$ is a counterexample to Conjecture~\ref{DiamondConjecture}, one might look to find a straightforward proof of Remark~\ref{SmallCounterexample} that does not rely on a computer calculation.

\begin{proposition}\label{Prop:Cone}
Let $\Delta$ be a simplicial complex. Then $\Delta$ has a rank $k$ boolean decomposition if and only if the cone $\ideal{v} \star \Delta$ has a rank $(k+1)$ boolean decomposition.
\end{proposition}

\begin{proof}
If $\Delta$ has a rank $k$ boolean decomposition, then each interval \([\sigma,\tau]\) in this decomposition can be modified into the interval \([\sigma, \tau \cup \{v\}]\) to form a rank $(k+1)$ boolean decomposition of $\ideal{v} \star \Delta$.

Assume instead that $\Delta$ does not have a rank $k$ boolean decomposition but that $\ideal{v} \star \Delta$ does have a rank $(k+1)$ boolean decomposition. Consider an interval $[\sigma,\tau]$ in this decomposition. If $v \in \sigma,\tau$, none of these faces appear in $\Delta$. If $v \in \tau$ but $v \notin \sigma$, $[\sigma,\tau]$ can be decomposed into $[\sigma, \tau \setminus \{v\}]$ and $[\sigma \cup \{v\}, \tau]$, where the first is a rank $k$ interval of $\Delta$ and the second contains no faces of $\Delta$. If instead $v \notin \sigma,\tau$, then $[\sigma,\tau]$ can be decomposed into $[\sigma, \tau \setminus \{x\}]$ and $[\sigma \cup \{x\}, \tau]$ for any $x \in \tau$. These intervals together give a decomposition of $\Delta$ into rank $k$ boolean intervals, which is a contradiction.
\end{proof}

Since the cone of a $k$-fold acyclic complex is $(k+1)$-fold acyclic, Proposition~\ref{Prop:Cone} allows us to produce higher-dimensional counterxamples to Conjecture~\ref{DiamondConjecture}.

\section{Boolean Trees}\label{bennet}

While Conjecture~\ref{DiamondConjecture} is false, we will use this section to prove a weakened version of it by replacing boolean intervals with boolean trees. We will rely on algebraic shifting, developed by Kalai in \cite{Ka00} and iterated homology, developed by Duval and Rose in \cite{DR00} and Duval and Zhang in \cite{DZ01}. We include all necessary results from these sources here for notation and ease of reference. We use $S(\Delta)$ to denote the (exterior) algebraic shifting of $\Delta$.

\begin{proposition}\label{IterBettiNumProp}\emph{\cite[Theorem 4.1]{DR00}}
Let $\Delta$ be a $d$-dimensional simplicial complex, $S(\Delta)$ its algebraic shifting, and $\beta^k[r](\Delta)$ an iterated Betti number (where $0 \le r \le k+1 \le d+1)$. Then
$$
\beta^k[r](\Delta) = \abs{\set{\text{facets}~T \in S(\Delta) ~:~ \abs{T} = k+1 ~\text{and}~\init(T)=r}}.
$$
where $\init(T) = \max\set{i \geq 0 ~:~ [i] \subseteq T}$ if $1 \in T$ and $\init(T) = 0$ otherwise.
\end{proposition}

A set $B$ of faces of a simplicial complex $\Delta$ is an \emph{$r$-Betti set} if $f_{k-r}(B) = \beta^k[r](\Delta)$ for all $k$.

\begin{theorem}\label{DZMainDecompThm}\emph{\cite[Theorem 3.2]{DZ01}}
Let $\Delta$ be a $d$-dimensional simplicial complex. Then there exists a chain of subcomplexes
$$
\{\varnothing\} = \Delta^{(d+1)} \subseteq \dots \subseteq \Delta^{(r)} \subseteq \Delta^{(r-1)} \subseteq \dots \subseteq \Delta^{(1)} \subseteq \Delta^{(0)} = \Delta,
$$
where
$$
\Delta^{(r)} = \Delta^{(r+1)} \sqcup B^{(r)} \sqcup \Omega^{(r+1)} ~~ (0 \le r \le d),
$$
and bijections
$$
\eta^{(r)} : \Delta^{(r)} \to \Omega^{(r)} ~~ (1 \le r \le d+1),
$$
such that, for each $r$,
\begin{enumerate}
\item $\Delta^{(r+1)}$ and $\Delta^{(r+1)} \sqcup B^{(r)}$ are subcomplexes of $\Delta^{(r)}$;
\item $B^{(r)}$ is an $r$-Betti set; and
\item for any $\sigma \in \Delta^{(r)}$, we have $\sigma \subsetneq \eta^{(r)}(\sigma)$ and $\abs{\eta^{(r)}(\sigma) \setminus \sigma} = 1$.
\end{enumerate}
\end{theorem}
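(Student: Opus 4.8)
The plan is to build the entire chain by iterating a single one-step decomposition $d+1$ times, starting from $\Delta^{(0)} = \Delta$ and removing one layer of iterated homology at each stage. The inductive step I would isolate as a lemma is: given the complex $\Delta^{(r)}$ equipped with the $r$-th iterated boundary operator of Duval and Rose on its chains, produce a partial matching on the Hasse diagram of $\Delta^{(r)}$ --- a bijection $\eta^{(r+1)}$ from a set of ``lower'' faces $\Delta^{(r+1)}$ onto a disjoint set of ``upper'' faces $\Omega^{(r+1)}$ with $\sigma \subsetneq \eta^{(r+1)}(\sigma)$ and $|\eta^{(r+1)}(\sigma) \setminus \sigma| = 1$ --- whose set of unmatched ``critical'' faces $B^{(r)} = \Delta^{(r)} \setminus (\Delta^{(r+1)} \sqcup \Omega^{(r+1)})$ is an $r$-Betti set, and such that both $\Delta^{(r+1)}$ and $\Delta^{(r+1)} \sqcup B^{(r)}$ are downward closed. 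Feeding $\Delta^{(r+1)}$ back into this lemma and iterating produces the chain; it terminates at $\{\varnothing\}$ at index $d+1$ because the matching condition forces $\dim\Delta^{(r+1)} \le \dim\Delta^{(r)} - 1$, so $\Delta^{(d+1)}$ has dimension $\le -1$.

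First I would fix the vertex order and set up the iterated chain complexes, recording via Proposition \ref{IterBettiNumProp} the target cardinalities $f_{k-r}(B^{(r)}) = \beta^k[r](\Delta) = |\{\text{facets } T \in S(\Delta) : |T| = k+1,\ \init(T) = r\}|$; this pins down the sizes of the critical sets before any matching is chosen. The one-step matching itself I would extract from linear algebra: on the subquotient chain complex computing the $r$-th iterated homology, choose a homogeneous basis in Smith-normal-form-type position, splitting each chain group into a part that is a boundary, a part mapping isomorphically onto the boundaries one degree down, and a part representing homology. Reading the matched pairs off the middle part and the critical faces off the homology part gives an algebraic Morse matching whose critical-cell counts are exactly the iterated Betti numbers.

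The step I expect to be the main obstacle is arranging that this Morse matching respects the face order, so that $\Delta^{(r+1)}$ and $\Delta^{(r+1)} \sqcup B^{(r)}$ come out as genuine subcomplexes and not merely as face sets of the right size. A matching optimized to minimize critical cells need not be downward closed, and an arbitrary downward-closed matching need not compute the right homology, so the two constraints must be reconciled. I would do this by exploiting the recursive structure of iterated homology: the $r$-th iterated boundary is assembled by coning off and deleting the vertices in increasing order, so each internal stage is a matching carried out inside an induced subcomplex; taking $\eta^{(r+1)}(\sigma)$ to adjoin the smallest admissible vertex then makes the collection of lower faces, and of lower-plus-critical faces, automatically closed under passing to subsets. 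The nontrivial content is to check that this initial-lexicographic choice is still a valid Morse matching computing iterated homology, which is where the properties of exterior algebraic shifting enter --- in particular its compatibility with deletion and link, so that the cardinality count of Proposition \ref{IterBettiNumProp} is respected stage by stage.

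Finally I would assemble the pieces: define $\Delta^{(r)}$, $B^{(r)}$, $\Omega^{(r)}$, $\eta^{(r)}$ by the iteration, deduce condition (1) from downward closure, condition (2) from the Proposition \ref{IterBettiNumProp} count, and condition (3) from the matching property, and confirm that the chain reaches $\{\varnothing\}$ at index $d+1$ via the dimension drop. The one remaining point of care is self-consistency of the induction: one must check that the iterated boundary operator at stage $r+1$ is the one induced on $\Delta^{(r+1)}$ by the stage-$r$ construction, which should follow from the defining recursion for iterated homology together with the fact that $\Delta^{(r+1)}$ was carved out as the lower half of an order-compatible matching.
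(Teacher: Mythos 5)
This statement is quoted directly from Duval and Zhang \cite[Theorem 3.2]{DZ01}; the paper you are working from gives no proof of it, so the only meaningful comparison is with the original source, whose argument proceeds by induction using the one-step decomposition theorem of Duval \cite[Theorem 1.1]{Du94} (itself a strengthening of Stanley's Theorem~\ref{StanleyTheorem}) together with the algebraic machinery of iterated homology from \cite{DR00}, not by constructing a discrete-Morse-style matching directly.

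As a proof attempt, your sketch defers exactly the two points that carry all of the mathematical content. First, the passage from a Smith-normal-form splitting of the chain groups to a bijection on \emph{faces} with \(\sigma \subsetneq \eta^{(r)}(\sigma)\), \(\abs{\eta^{(r)}(\sigma)\setminus\sigma}=1\), and with \(\Delta^{(r+1)}\) and \(\Delta^{(r+1)} \sqcup B^{(r)}\) closed under taking subsets, is the hard combinatorial step: an algebraic splitting lives on chain groups, not on faces, and your proposed fix --- adjoin the smallest admissible vertex --- is never shown to produce a valid matching whose unmatched faces are counted by the iterated Betti numbers; a greedy or lexicographic matching will in general leave the wrong number of critical faces in each dimension, and reconciling order-compatibility with the homological count is precisely what Stanley's and Duval's linear-algebra lemmas are for. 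Second, the ``self-consistency'' issue you mention only at the end --- that the set \(B^{(r)}\) produced at stage \(r\) must be an \(r\)-Betti set \emph{for the original} \(\Delta\), i.e.\ that the iterated homology of \(\Delta\) is correctly reflected in the complex \(\Delta^{(r)}\) carved out by the earlier stages --- is where Duval and Zhang do substantial work, and ``should follow from the defining recursion'' is an assertion, not an argument. (A small further point: the dimension-drop observation only yields \(\Delta^{(d+1)} \subseteq \set{\varnothing}\).) So the proposal is an outline that restates the theorem's difficulties rather than resolving them; for the purposes of this paper the correct move is what the authors do, namely cite \cite{DZ01}, or else reproduce their induction based on \cite{Du94}.
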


\begin{proposition}\label{ShiftPreservesAcyc}\emph{\cite[a specialization of Theorem 4.2]{Ka00}}
If $\Delta$ is $k$-fold acyclic, then its algebraic shifting $S(\Delta)$ is also $k$-fold acyclic.
\end{proposition}

\begin{proposition}\label{Shift&AcycImpliesCone}
If $\Delta$ is shifted and $k$-fold acyclic, then $\Delta$ is a $k$-fold cone.
\end{proposition}

\begin{proof}
Since $\Delta$ is shifted, $\Delta = S(\Delta)$. By \cite[Theorem 4.3]{BK88},
$$
\beta_i(\Delta) = \abs{\set{\text{facets}~T \in \Delta ~:~ \abs{T} = i + 1 ~\text{and}~1 \not \in T}}
$$
Since $\Delta$ is assumed to be $k$-fold acyclic, it is in particular acyclic. Thus $\beta_i(\Delta) = 0$ for all $i$, which implies that $\Delta = \ideal{1} \star \Gamma_1$ for some complex $\Gamma_1$. By \cite[Proposition 2.3]{DR00}, $\Gamma_1$ is shifted on the remaining vertices, and we also know that $\Gamma_1$ is $(k-1)$-fold acyclic. 
Repeating this argument, we see that $\Delta = \ideal{1} \star \ideal{2} \star \dots \star \ideal{k} \star \Delta' = \ideal{12 \dots k} \star \Delta'$ for some subcomplex $\Delta'$, i.e., $\Delta$ is a $k$-fold cone.
\end{proof}

We are now able to prove the following relaxation of Conjecture~\ref{DiamondConjecture}.

\begin{theorem}\label{AcycBoolTreeThm}
Let $\Delta$ be $k$-fold acyclic. Then $\Delta$ can be written as a disjoint union of boolean trees of rank $k$. Furthermore, the minimal faces of these boolean trees together form a subcomplex $\Delta'$.
\end{theorem}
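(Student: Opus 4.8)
The plan is to run $k$-fold acyclicity through Kalai's algebraic shifting to force the low-order iterated Betti numbers to vanish, and then read off the boolean trees directly from the Duval--Zhang decomposition of Theorem~\ref{DZMainDecompThm}.

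The essential first step is the vanishing. By Propositions~\ref{ShiftPreservesAcyc} and~\ref{Shift&AcycImpliesCone} (specifically, by the argument in the proof of the latter applied to the shifted, $k$-fold acyclic complex $S(\Delta)$), we get $S(\Delta) = \ideal{12\dots k}\star\Delta'$ for some subcomplex $\Delta'$. Hence every facet $T$ of $S(\Delta)$ contains $[k]$, so $\init(T)\ge k$, and Proposition~\ref{IterBettiNumProp} gives $\beta^{k'}[r](\Delta)=0$ for all $k'$ whenever $r<k$. Plugging this into Theorem~\ref{DZMainDecompThm}: since $B^{(r)}$ is an $r$-Betti set, $f_m(B^{(r)})=\beta^{m+r}[r](\Delta)=0$ for every $m$ whenever $r<k$, so $B^{(r)}=\varnothing$ for $0\le r\le k-1$. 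Therefore $\Delta^{(r)} = \Delta^{(r+1)}\sqcup\Omega^{(r+1)}$ for $0\le r\le k-1$; equivalently $\Delta^{(r-1)} = \Delta^{(r)}\sqcup\Omega^{(r)}$ with $\eta^{(r)}\colon\Delta^{(r)}\to\Omega^{(r)}$ a bijection for $1\le r\le k$ (here $k\le d+1$, so $\Delta^{(k)}$ does lie in the chain). Thus along $\Delta = \Delta^{(0)}\supseteq\Delta^{(1)}\supseteq\dots\supseteq\Delta^{(k)}$ there is no ``Betti'' contribution: every face outside $\Delta^{(k)}$ is $\eta^{(r)}(\sigma)$ for a unique $r\le k$ and a unique $\sigma\in\Delta^{(r)}$.

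Next I would build the trees. For $0\le j\le k$, $\sigma\in\Delta^{(j)}$ and a subset $S=\{i_1>\dots>i_m\}\subseteq[j]$, set $\sigma_S := \eta^{(i_m)}\circ\dots\circ\eta^{(i_1)}(\sigma)$, with $\sigma_\varnothing=\sigma$; this composite is defined because $\eta^{(i)}$ maps $\Delta^{(i)}$ into $\Omega^{(i)}\subseteq\Delta^{(i-1)}$. Let $T_j(\sigma) := \{\sigma_S : S\subseteq[j]\}$. I would prove by induction on $j$: $T_j(\sigma)$ is a rank-$j$ boolean tree with unique minimal element $\sigma$; the trees $\{T_j(\sigma):\sigma\in\Delta^{(j)}\}$ are pairwise disjoint; and $\bigsqcup_{\sigma\in\Delta^{(j)}}T_j(\sigma)=\Delta$. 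The base case $j=0$ is immediate. For the inductive step, partitioning the subsets of $[j]$ according to whether they contain $j$ gives $T_j(\sigma) = T_{j-1}(\sigma)\cup T_{j-1}(\eta^{(j)}(\sigma))$, a union of two rank-$(j-1)$ boolean trees which are disjoint and rooted at the distinct faces $\sigma\in\Delta^{(j)}$ and $\eta^{(j)}(\sigma)\in\Omega^{(j)}$ of $\Delta^{(j-1)}$ (by the inductive hypothesis), where $\eta^{(j)}(\sigma)$ covers $\sigma$ by Theorem~\ref{DZMainDecompThm}(3): this is precisely the recursive clause defining a rank-$j$ boolean tree. Disjointness across distinct roots and the equality $\bigsqcup_{\sigma\in\Delta^{(j)}}T_j(\sigma)=\bigsqcup_{\mu\in\Delta^{(j-1)}}T_{j-1}(\mu)$ follow from $\Delta^{(j-1)}=\Delta^{(j)}\sqcup\Omega^{(j)}$, the bijectivity of $\eta^{(j)}$, and the inductive hypothesis at level $j-1$.

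Taking $j=k$ completes the proof: the rank-$k$ boolean trees $T_k(\sigma)$, $\sigma\in\Delta^{(k)}$, partition the face poset of $\Delta$; their minimal faces are exactly the faces of $\Delta^{(k)}$; and $\Delta' := \Delta^{(k)}$ is a subcomplex of $\Delta$ by iterating Theorem~\ref{DZMainDecompThm}(1). I expect the one genuinely substantive point to be the vanishing step: recognizing that $k$-fold acyclicity is exactly the hypothesis that empties the Betti layers $B^{(0)},\dots,B^{(k-1)}$ of the Duval--Zhang decomposition, leaving a clean iterated matching. The subsequent combinatorics is bookkeeping; the only place to be careful is checking that the recursion for $T_j(\sigma)$ matches the boolean-tree definition verbatim, i.e., that each application of $\eta^{(j)}$ contributes exactly one new covering relation and that it sits at the root.
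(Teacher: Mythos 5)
Your proposal is correct and follows essentially the same route as the paper: algebraic shifting plus the $k$-fold cone structure of $S(\Delta)$ forces $\beta^{i}[r](\Delta)=0$ for $r<k$, so the Betti sets $B^{(0)},\dots,B^{(k-1)}$ in the Duval--Zhang decomposition are empty, and the bijections $\eta^{(r)}$ pair up lower-rank boolean trees into rank-$k$ boolean trees exactly as in the paper's iterative step. Your explicit $\sigma_S$ bookkeeping and your identification of the root complex as $\Delta^{(k)}$ (versus the paper's $\Delta^{(k+1)}\sqcup B^{(k)}$) are only cosmetic variations on the same argument.
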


\begin{proof}
{The proof is similar to the proof of \cite[Corollary 3.5]{DZ01}. We will make use of Theorem~\ref{DZMainDecompThm}, and we will use the notation of that theorem.}

By Propositions \ref{ShiftPreservesAcyc} and \ref{Shift&AcycImpliesCone}, $S(\Delta) = \ideal{1\dots k} \star \Delta'$ for some complex $\Delta'$. 
Since $S(\Delta)$ is a $k$-fold cone, $\init(T) \ge k$ for all facets $T \in S(\Delta)$, and thus $\beta^i[r](\Delta) = 0$ for $r < k$ by Proposition~\ref{IterBettiNumProp}.

Step 0: Note that all faces of $\Delta = \Delta^{(0)}$ form rank $0$ boolean trees.

We will perform the following step $k$ times: Assume this step has been completed $i < k$ times, so the minimal elements of boolean trees of rank $i$ are all of the faces of $\Delta^{(i)}$. By Theorem~\ref{DZMainDecompThm},
\begin{align*}
\Delta^{(i)} &= \Delta^{(i+1)} \sqcup B^{(i)} \sqcup \Omega^{(i+1)} \\
&= \Delta^{(i+1)} \sqcup \Omega^{(i+1)}
\end{align*}
with the second equality by Theorem~\ref{DZMainDecompThm} (2) since $i<k$. For each face $\sigma \in \Delta^{(i+1)}$, we combine the rank $i$ boolean trees with minimal elements $\sigma$ and $\eta^{(i+1)}(\sigma)$ to form rank $(i+1)$ boolean trees. Since $B^{(i)} = \varnothing$, there are no rank $i$ boolean trees remaining after this step.

Furthermore, if we stop this process after $k$ iterations, we see that the minimal elements of the resulting boolean trees are precisely the faces of $\Delta^{(k+1)} \sqcup B^{(k)}$. 
We know that
$$
\Delta^{(k+1)} \sqcup B^{(k)} \subseteq \Delta^{(k)} \subseteq \Delta^{(k-1)} \subseteq \dots \subseteq \Delta^{(0)} = \Delta
$$
as subcomplexes, therefore the minimal elements of these boolean trees together form a subcomplex $\Delta' = \Delta^{(k+1)} \sqcup B^{(k)}$.
\end{proof}

The subcomplex $\Delta^{(k+1)} \sqcup B^{(k)}$ described in Theorem~\ref{AcycBoolTreeThm} is a combinatorial witness to the subcomplex in \cite[Proposition 2.3]{St93}. This shows that a correct generalization of Stanley's acyclic matching is to boolean trees rather than boolean intervals.

We note the similarity between the resolution of this conjecture and the Partitionability Conjecture (see, e.g., \cite{DK17, St79}). A complex $\Delta$ is \emph{partitionable} if its face poset can be written as the disjoint union of boolean intervals whose maximal faces are the \textit{facets} of $\Delta$. Though there exist Cohen--Macaulay complexes which are not partitionable \cite{DG16}, all Cohen--Macaulay complexes do have a similar decomposition if ``boolean interval'' is replaced in the definition of partitionable with ``boolean tree'' \cite[Theorem 5.4]{DZ01}.

\section{{\it d}-fold Acyclic Complexes}\label{dimfold}

In this section, we will show that Conjecture~\ref{DiamondConjecture} holds for $d$-fold acyclic complexes where $d = \dim \Delta$. We first show that Conjecture~\ref{DiamondConjecture} holds for stacked complexes. We then show that $d$-dimensional $d$-fold acyclic complexes must be stacked. Thus Conjecture~\ref{DiamondConjecture} holds when $k=\dim\Delta$.

Our interest in this case was sparked by the following result.

\begin{theorem}\label{2dimStacked}\emph{[Duval--Klivans--Martin, unpublished]}
If $\Delta$ is $2$-dimensional and $2$-fold acyclic, then $\Delta$ is stacked.
\end{theorem}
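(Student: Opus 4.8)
The plan is to prove Theorem~\ref{2dimStacked} by induction on the number of facets, using the structure of $2$-fold acyclicity at vertices and edges. The key observation is that $2$-fold acyclic means $\Delta$ is acyclic (hence connected, since $\tilde H_0 = 0$) and every vertex link is acyclic. For a $2$-dimensional complex, a vertex link is a graph, and an acyclic graph is a tree. So first I would argue that $\Delta$ is pure: if some edge or vertex were a facet, one could find a vertex whose link fails to be connected or fails to be acyclic, or use the fact that a pure-ifying argument via links forces dimension $2$ everywhere — more carefully, if $v$ lies on a facet edge $\{v,w\}$ not contained in any triangle, then $w \in \lk_\Delta v$ is an isolated-ish vertex obstructing the tree structure only if $\lk v$ is disconnected, so the real point is to rule out lower-dimensional facets by a global connectivity/homology count. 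I would phrase this as: since every vertex link is a nonempty tree (nonempty because $\Delta$ is connected and has dimension $2$), every vertex lies on at least one triangle and every edge through a vertex $v$ extends within $\lk v$; an edge $e=\{v,w\}$ lying in exactly the triangles corresponding to the edge $\{w\}$'s position in the tree $\lk v$ — combining this at both endpoints shows every edge is in exactly one or two triangles, which is exactly the local picture of a stacked complex (a stacked shelling builds by gluing a triangle along a single edge).

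**Next I would** set up the induction. The base case is a single triangle, which is trivially stacked. For the inductive step, I need to find a ``free'' facet: a triangle $F$ such that $\Delta \setminus F := \langle \text{other facets} \rangle$ has one fewer facet, is still $2$-fold acyclic, and $F$ meets it in a single edge. Equivalently, reading the stacked shelling backwards, I want a triangle $F_j$ with a vertex $v$ of degree appropriate so that removing $F_j$ removes exactly one vertex and one triangle. Concretely: look for a vertex $v$ whose link is a single edge (a leaf situation in the ``dual'' sense). Because $\lk_\Delta v$ is a tree for every vertex $v$, and trees have leaves, I can hope to locate a vertex $v$ that appears in exactly one triangle $\{v, a, b\}$; then $v$ has link the single edge $\{a,b\}$. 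Removing the star of $v$ should leave a complex $\Delta'$ that is again $2$-fold acyclic (checking $\tilde H_*(\Delta') = 0$ via Mayer--Vietoris with the triangle $\{v,a,b\}$, and checking vertex links of $\Delta'$ are still trees — the only affected links are those of $a$ and $b$, where a leaf edge is pruned from a tree, yielding a tree) and has one fewer triangle. Then by induction $\Delta'$ is stacked, and appending $\{v,a,b\}$ along the edge $\{a,b\}$ extends the stacked shelling.

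**The main obstacle** I anticipate is showing that such a vertex $v$ of ``triangle-degree one'' actually exists, and more subtly that removing its star preserves $2$-fold acyclicity and does not disconnect anything or create bad links. The existence is a counting/extremal argument: since every vertex link is a tree, summing leaf counts over all vertices and using a double-counting of (vertex, leaf-edge-of-its-link) incidences against the finitely many triangles should force some vertex to have a very small link; one must be careful that ``link is an edge'' (not just ``link has a leaf'') is what is needed, so the argument may instead produce a vertex $v$ and a leaf $w$ of $\lk v$, giving an edge $\{v,w\}$ in exactly one triangle $\{v,w,u\}$ — this is the genuine ``free edge'' of a stacked complex and is actually the better object to peel off. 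The second obstacle is the base-of-the-induction bookkeeping when $\Delta'$ might become a single triangle or when $a$ or $b$ drops out; these degenerate cases need separate easy checks. I would also need Theorem~\ref{StanleyTheorem} or a direct Euler-characteristic computation ($f_{-1} - f_0 + f_1 - f_2 = 0$, i.e.\ $1 - n + e - t = 0$, combined with $\sum_v (\deg_{\lk v} \text{edges}) $ relations from each link being a tree) to pin down the global count $e = n + t - 1$ and $2e = 3t + (\text{boundary edges})$, which together with the link-is-a-tree conditions yield the extremal inequality needed to find the free edge.
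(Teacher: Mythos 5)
Your overall strategy---induct on the number of facets by locating a vertex whose link is a single edge and peeling off its star---is viable and is genuinely different from the paper's route: the paper proves the stronger statement that every $d$-dimensional $d$-fold acyclic complex is stacked (Theorem \ref{DimfoldtoStacked}), by first establishing purity and connectivity of the facet--ridge graph, ordering the facets along a spanning tree of that graph, and then using the forced $f$-polynomial $(1+t)^d(1+jt)$ to conclude that the resulting intervals are disjoint, hence give a stacked shelling. Your link-by-link induction is special to dimension $2$, but it could in principle give a more elementary, self-contained proof. As written, however, it has genuine gaps exactly at the step you flag as the main obstacle.

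First, the fallback you settle on---peeling the triangle $F=\{v,w,u\}$ off along a free edge $\{v,w\}$---does not work. If $v,w,u$ and the edges $\{v,u\},\{w,u\}$ all lie in other facets, then $\Delta'=\langle\text{facets other than }F\rangle$ loses exactly one triangle and one edge; the link of $u$ in $\Delta'$ keeps both vertices $v,w$ but loses the edge $\{v,w\}$, so a tree becomes a disconnected forest and $\Delta'$ is not $2$-fold acyclic, killing the induction. Moreover re-attaching $F$ adds no new vertex, so $\Delta'\cap\langle F\rangle$ contains three vertices and is not a $1$-simplex: this is not a stacked-shelling step. (Indeed, acyclicity plus ``every vertex link is a tree'' give $2f_1=3f_2+f_0$ and $f_0-f_1+f_2=1$, hence $f_0=f_2+2$, so every shelling step must add exactly one vertex, two edges, and one triangle.) Relatedly, your claim that every edge lies in at most two triangles is false: several triangles glued along a common edge form a stacked, hence $2$-fold acyclic, complex. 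Second, what the induction actually needs---a vertex $v$ with $\lk_\Delta v$ equal to a single edge---is only gestured at, and the naive average-degree count from $f_0=f_2+2$ yields merely a vertex in at most two triangles. The count can be completed, but needs one more idea: once purity is established (which also needs a real argument, since a link that is a single point is acyclic), every vertex link is a tree with at least one edge and hence has at least two leaves, so the number of edges lying in exactly one triangle is at least $f_0=f_2+2>f_2$; by pigeonhole some triangle contains two such edges, these share a vertex $v$, and then $\lk_\Delta v$ is a tree containing an edge both of whose endpoints are leaves, which forces $\lk_\Delta v$ to be exactly that edge. With that vertex in hand, your Mayer--Vietoris and leaf-pruning verification that deleting $\st v$ preserves $2$-fold acyclicity goes through and the induction closes; without it, the proposal does not yet constitute a proof.
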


Theorem~\ref{2dimStacked} together with the following proposition shows that Conjecture~\ref{DiamondConjecture} holds if $\dim \Delta \le 2$.

\begin{proposition}\label{StackedIsDiamondableProp}
Let $\Delta$ be a $d$-dimensional stacked simplicial complex. Then $\Delta$ is $d$-fold acyclic and $\Delta$ can be written as the disjoint union of rank $d$ boolean intervals, the minimal elements of which form a subcomplex $\Delta' \subseteq \Delta$. Furthermore, Conjecture~\ref{DiamondConjecture} holds for stacked complexes.
\end{proposition}

\begin{proof}
We will first show that stacked complexes are $d$-fold acyclic by induction on $d$. As a base case, notice that if $\dim \Delta = 1$, then $\Delta$ is stacked if and only if $\Delta$ is a connected acyclic graph (i.e., a tree), and thus is $1$-fold acyclic.

Assume the result holds for lower dimensions. 
Let $\sigma \in \Delta$ such that $\abs{\sigma} < d$. We note that $\link_\Delta \sigma$ has a stacked shelling order induced from the stacked shelling of $\Delta$. 
If $\sigma \ne \varnothing$, then $\dim \link_\Delta \sigma < \dim \Delta$, and since \(\link_\Delta \sigma\) is stacked and of lower dimension, it is also acyclic by assumption. 
 If instead $\sigma = \varnothing,$ then $\link_\Delta \sigma = \Delta$, which is acyclic following a standard argument about the homology of shellable complexes. Thus $\Delta$ is $d$-fold acyclic by induction on dimension.

Given a stacked complex $\Delta$, its stacked shelling $F_1,\dots F_j$ gives rise to the following decomposition:
$$
\Delta = [\varnothing, F_1] \sqcup [v_2,F_2] \sqcup [v_3,F_3] \sqcup \dots \sqcup [v_j, F_j]
$$
For any vertex $v_1 \in F_1$, we can write $[\varnothing, F_1] = [\varnothing,F_1\setminus\set{v_1}] \sqcup [v_1,F_1]$. Therefore $\Delta$ can be decomposed as
$$
\Delta = [\varnothing,F_1\setminus\set{v_1}] \sqcup [v_1,F_1] \sqcup [v_2,F_2] \sqcup [v_3,F_3] \sqcup \dots \sqcup [v_j, F_j]
$$
and $\Delta' = \set{\varnothing,v_1,v_2,\dots,v_j}$ is a subcomplex of $\Delta$. For any choice of \(k \leq d\), a stacked complex can be decomposed into a refinement of the above decomposition so the parts are rank \(k\) boolean intervals and the minimal elements of these intervals form a subcomplex. One way to do this is to totally order the vertices by their order of appearance in the shelling, and take the appropriate lex-least faces to be the minimal faces. Thus Conjecture~\ref{DiamondConjecture} holds for stacked complexes.
\end{proof}

\begin{lemma}\label{ddfvector}
Let \(\Delta\) be \(d\)-dimensional and \(d\)-fold acyclic. Then the f-polynomial of \(\Delta\) is \( f(\Delta,t)=(1+t)^{d}(1+nt)\) where $n$ is the number of facets of \(\Delta\).
\end{lemma}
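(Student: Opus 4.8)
The plan is to show first that $\Delta$ must be pure of dimension $d$, and then to read off the $f$-polynomial from Stanley's divisibility result \cite[Proposition 2.3]{St93} by a short comparison of degrees and leading coefficients. Purity is really the only substantive step.

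To prove purity I would argue by contradiction: suppose $\Delta$ has a facet $F$ with $\abs{F}\le d$. I would then show, by downward induction on $\abs{\sigma}$, that $\link_\Delta\sigma = \ideal{F\setminus\sigma}$ for every $\sigma\subseteq F$, where $\ideal{F\setminus\sigma}$ denotes the full simplex on the vertex set $F\setminus\sigma$ (so $\ideal{\varnothing}=\{\varnothing\}$). The base case $\sigma=F$ is immediate, since $F$ is a facet forces $\link_\Delta F=\{\varnothing\}$. For the inductive step we have $\abs{\sigma}<\abs{F}\le d$, so $\link_\Delta\sigma$ is acyclic by $d$-fold acyclicity; it clearly contains $\ideal{F\setminus\sigma}$, and if some face $\tau\in\link_\Delta\sigma$ meets $F\setminus\sigma$ at a vertex $v$, then $\tau\setminus\{v\}\in\link_\Delta(\sigma\cup\{v\})=\ideal{F\setminus(\sigma\cup\{v\})}$ by the inductive hypothesis, which forces $\tau\subseteq F\setminus\sigma$. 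Hence $\ideal{F\setminus\sigma}$ is a union of connected components of $\link_\Delta\sigma$; as the latter is acyclic (so connected) and $\ideal{F\setminus\sigma}$ is itself connected and nonempty, the two must coincide. Setting $\sigma=\varnothing$ gives $\Delta=\ideal{F}$, so $\dim\Delta=\abs{F}-1<d$, a contradiction. Thus every facet of $\Delta$ has $d+1$ vertices.

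With purity in hand the rest is formal. Since $\Delta$ is $d$-fold acyclic, \cite[Proposition 2.3]{St93} gives $f(\Delta,t)=(1+t)^d f(\Gamma,t)$ for some complex $\Gamma$, so $(1+t)^d$ divides $f(\Delta,t)$. Because $\dim\Delta=d$, the polynomial $f(\Delta,t)$ has degree exactly $d+1$, so the quotient $q(t)=f(\Delta,t)/(1+t)^d$ has degree $1$; comparing constant terms gives $q(0)=f_{-1}(\Delta)=1$, and comparing leading coefficients gives that the leading coefficient of $q$ is $f_d(\Delta)$, which by purity equals the number $n$ of facets of $\Delta$. Therefore $q(t)=1+nt$ and $f(\Delta,t)=(1+t)^d(1+nt)$.

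I expect the main obstacle to be the purity step, and within it the inductive identity $\link_\Delta\sigma=\ideal{F\setminus\sigma}$ — the delicate point is checking that no face of $\link_\Delta\sigma$ reaches outside $F\setminus\sigma$, which is exactly where the inductive hypothesis on the links of the larger faces $\sigma\cup\{v\}$ is used. (As an alternative to citing \cite[Proposition 2.3]{St93}, the divisibility $(1+t)^d\mid f(\Delta,t)$ could instead be obtained by induction on $d$ from the identity $f'(\Delta,t)=\sum_v f(\link_\Delta v,t)$ together with the fact that vertex links are $(d-1)$-fold acyclic, but citing the already-stated result keeps the argument short.)
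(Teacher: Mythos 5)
Your proof is correct, and it is more complete than the paper's, which disposes of this lemma in one line (``follows immediately from \cite[Proposition 2.3]{St93}''), leaving the degree/leading-coefficient bookkeeping to the reader. The divisibility-plus-comparison part of your argument is exactly the intended route. The genuine difference is your treatment of purity: identifying the leading coefficient $f_d$ with the number of facets does require knowing every facet is $d$-dimensional, and the paper only establishes purity in the \emph{next} lemma (via connectivity of the facet--ridge graph, by showing a facet of the intersection of two components would have a disconnected link), whereas you prove it on the spot by a downward induction showing that a facet $F$ with $\abs{F}\le d$ forces $\link_\Delta\sigma=\ideal{F\setminus\sigma}$ for all $\sigma\subseteq F$, hence $\Delta=\ideal{F}$, contradicting $\dim\Delta=d$. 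Your induction is sound: the key step (a face of $\link_\Delta\sigma$ meeting $F\setminus\sigma$ must lie inside it, so the simplex on $F\setminus\sigma$ is a full connected component, which must be everything since acyclic implies connected) is exactly right. Neither ordering is circular, since the paper's purity lemma does not use the $f$-polynomial; your version simply makes this lemma self-contained, at the cost of partially duplicating work the paper does later by a different (facet--ridge graph) argument, which the paper also needs for connectivity in Theorem \ref{DimfoldtoStacked}.
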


\begin{proof}
This follows immediately from \cite[Proposition 2.3]{St93}.
\end{proof}

\begin{lemma}\label{DimFoldAcycPureandConnected}
Let $\Delta$ be $d$-dimensional and $d$-fold acyclic. Then $\Delta$ is pure and its facet-ridge graph is connected.
\end{lemma}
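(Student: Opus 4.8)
The plan is to prove purity first, then connectivity of the facet-ridge graph, using the $f$-vector identity from the previous lemma together with the link characterization of $d$-fold acyclicity. For purity, I would argue by contradiction: suppose $\Delta$ has a facet $\sigma$ with $\dim \sigma < d$. First note $\Delta$ is acyclic (it is in particular $1$-fold acyclic), hence connected and nonempty. The key local observation is that for any vertex $v$, $\link_\Delta v$ is $(d-1)$-fold acyclic, and (by induction on dimension) one wants to know it is $(d-1)$-dimensional and pure. The base case $d=1$ is immediate: a $1$-fold acyclic $1$-complex is a tree, which is pure of dimension $1$ and connected. For the inductive step, I would pick any vertex $v$ and apply the inductive hypothesis to $\link_\Delta v$; combined with the fact that every maximal face of $\Delta$ contains at least one vertex and every face of $\Delta$ containing $v$ corresponds to a face of $\link_\Delta v$ of one lower dimension, purity of all vertex links forces purity of $\Delta$ — a facet of dimension $<d$ would give a facet of $\link_\Delta v$ of dimension $<d-1$ for some vertex $v$ it contains, contradicting purity of the link. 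One must be slightly careful that $d$-fold acyclicity of $\Delta$ does pass to $(d-1)$-fold acyclicity of each vertex link: this is exactly the statement that $\link_{\link_\Delta v}\tau = \link_\Delta(\tau\cup v)$ and $|\tau| < d-1 \iff |\tau \cup v| < d$, so it is routine.

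For connectivity of the facet-ridge graph, I would again induct on $d$ and use the link structure. If $d = 1$, the facet-ridge graph is the complex itself (facets are edges, ridges are vertices), which is connected since $\Delta$ is a tree. For $d \ge 2$: fix a vertex $v$; by the inductive hypothesis $\link_\Delta v$ is pure $(d-1)$-dimensional with connected facet-ridge graph, so all facets of $\Delta$ through $v$ are connected to one another in the facet-ridge graph of $\Delta$ (a facet-ridge path in $\link_\Delta v$ lifts to one in $\Delta$ by adding $v$ back). It then remains to connect facets through different vertices; here I would use that $\Delta$ is connected (as a topological space / via its $1$-skeleton) so any two vertices are joined by a path of edges, and each edge $uw$ lies in some facet (by purity), which is through both $u$ and $w$ — chaining these gives a facet-ridge path between a facet through $u$ and a facet through $w$. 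Then combine with the within-a-vertex-star connectivity.

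The main obstacle I anticipate is the connectivity half, specifically making the "different vertices" gluing rigorous: knowing $\Delta$ is topologically connected gives an edge-path, but I need each intermediate step to produce an actual facet-ridge adjacency, which is where purity (every edge is in a facet, and consecutive facets sharing a vertex are linked via the connected facet-ridge graph of that vertex's link) does the work. An alternative, possibly cleaner route for connectivity: suppose the facet-ridge graph is disconnected, write $\Delta = \Delta_1 \cup \Delta_2$ where $\Delta_1, \Delta_2$ are the pure subcomplexes generated by the facets in two different components; then $\Delta_1 \cap \Delta_2$ has dimension $\le d-2$, and a Mayer–Vietoris argument shows $\tilde H_{d-1}(\Delta) \ne 0$ (the fundamental classes, or rather a nonzero $(d-1)$-cycle) contradicting acyclicity of $\Delta$. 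I would likely present whichever of these is shorter, but I expect the Mayer–Vietoris version to be the tidiest and to parallel the style of Lemma \ref{AcycGlueProp}.
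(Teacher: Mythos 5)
There is a genuine gap, and it sits in both halves of your plan. For purity, your induction applies the inductive hypothesis to $\link_\Delta v$, but that hypothesis only covers complexes that are $(d-1)$-dimensional \emph{and} $(d-1)$-fold acyclic. A $(d-1)$-fold acyclic complex need not be $(d-1)$-dimensional (the simplex on $d-1$ vertices is $(d-1)$-fold acyclic of dimension $d-2$), and for the very vertex $v$ you need --- a vertex of a putative facet of dimension $<d$ --- there is no a priori reason that $\link_\Delta v$ has dimension $d-1$; indeed ``every vertex link is $(d-1)$-dimensional'' is exactly the statement that every vertex lies in a $d$-face, which is essentially the purity you are trying to prove. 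So the step ``a facet of dimension $<d$ gives a facet of $\link_\Delta v$ of dimension $<d-1$, contradicting purity of the link'' is circular as written. Your second, ``cleaner'' route for connectivity is also flawed: if the facet-ridge graph is disconnected and you write $\Delta = \Delta_1 \cup \Delta_2$ with $\dim(\Delta_1\cap\Delta_2)\le d-2$, Mayer--Vietoris together with mere acyclicity of $\Delta$ does \emph{not} force $\tilde{H}_{d-1}(\Delta)\ne 0$: two $d$-simplices glued at a single vertex form an acyclic complex with disconnected facet-ridge graph and $\tilde{H}_{d-1}=0$. The obstruction to disconnection cannot be seen in the top homology of $\Delta$ alone; it must come from acyclicity of links of \emph{positive-size} faces, i.e.\ from the full strength of $d$-fold acyclicity.

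For comparison, the paper avoids induction entirely and packages purity and connectivity into one argument: the facet-ridge graph is defined with edges only between facets of equal dimension, so impurity already forces disconnection. Assuming two components $C_1$ (with $d$-dimensional facets) and $C_2$, one takes a facet $\sigma$ of $\ideal{C_1}\cap\ideal{C_2}$; such $\sigma$ has size at most $d-1$, $\link_{\ideal{C_1}}\sigma$ has dimension at least $1$, $\link_{\ideal{C_2}}\sigma$ is nonempty, and the two links meet only in the empty face, so $\link_\Delta\sigma$ is disconnected --- contradicting $d$-fold acyclicity since $\abs{\sigma}<d$. This is the missing ingredient in both of your routes: the contradiction is a disconnected link of a face of the intersection, not a top-dimensional cycle of $\Delta$ and not purity of an arbitrary vertex link. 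Your first connectivity argument (lifting facet-ridge paths from vertex links and chaining along the $1$-skeleton) is sound once purity is established, but as it stands it rests on the purity step that has the gap.
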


\begin{proof}
Let \(\mathcal{F}(\Delta)\) denote the facet-ridge graph of \(\Delta\), the graph whose vertices are facets of $\Delta$ and whose edges are pairs of facets 
of the same dimension whose intersection is a face of dimension one smaller than each of the facets. By definition, this graph is disconnected if there are facets of different dimensions.

Suppose \(\mathcal{F}(\Delta)\) is disconnected. Let \(C_1\) be the collection of $d$-dimensional facets in one component of \(\mathcal{F}(\Delta)\). Let \(C_2\) be a different collection of facets in a component of \(\mathcal{F}(\Delta)\) so that \(I= \langle C_1 \rangle \cap \langle C_2 \rangle\) has maximum dimension among all choices of \(C_2\). Let \(\sigma\) be a facet of \(I\). In \(\mathcal{F}(\Delta)\), the components containing \(C_1\) and \(C_2\) are not connected, so every face of \(I\) must be of dimension at most $d-2$. Otherwise that face would appear in \(\mathcal{F}(\Delta)\) as either a vertex or an edge in both \(C_1\) and \(C_2\). 
In particular, \(\sigma\) is at most \((d-2)\)-dimensional. Since \(\sigma\) is in \(I\), \(\link_{\langle C_1 \rangle}\sigma\) has at least one \(1\)-dimensional face and \(\link_{\langle C_2 \rangle}\sigma\) is nonempty. Since \(\sigma\) is a facet of \(I\), the intersection \(\link_{\langle C_1 \rangle}\sigma \cap \link_{\langle C_2 \rangle}\sigma =\link_I \sigma\) is  the empty face. In fact, for any other component \(C_i\) of $\mathcal{F}(\Delta)$, \(\link_{\langle C_1 \rangle}\sigma \cap \link_{\langle C_i \rangle}\sigma\) is the empty face. Therefore \(\lk_\Delta\sigma\) is disconnected and of dimension at least \(1\). Since \(\Delta\) is \(d\)-fold acyclic, \(\lk_\Delta\sigma\) must be acyclic. This is a contradiction, so \(\mathcal{F}(\Delta)\) must be connected.

Since \(\mathcal{F}(\Delta)\) is connected, \(\Delta\) must be pure.
\end{proof}

We prove the following theorem using combinatorial methods, but we note that it is ripe for proof with other methods, including a more algebraic approach using quasi-forests, Hochster's formula, and Dirac's theorem on chordal graphs.

\begin{theorem}\label{DimfoldtoStacked}
Let \(\Delta\) be \(d\)-dimensional and \(d\)-fold acyclic. Then \(\Delta\) is a stacked complex.
\end{theorem}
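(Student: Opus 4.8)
The plan is to induct on the number $n$ of facets of $\Delta$. If $n=1$ then $\Delta$ is a $d$-simplex, which is stacked, so suppose $n\ge 2$ and that the theorem holds for all \(d\)-dimensional \(d\)-fold acyclic complexes with fewer facets. Everything reduces to producing a \emph{free vertex} of $\Delta$, meaning a vertex \(v\) lying in exactly one facet \(F\). Given such a \(v\), set \(\Delta' := \{\sigma\in\Delta : v\notin\sigma\}\). Since \(v\) lies only in \(F\), the faces of \(\Delta\) containing \(v\) are exactly the interval \([v,F]\), so \(\Delta = \Delta'\cup\ideal{F}\) with \(\Delta'\cap\ideal{F} = \ideal{F\setminus\{v\}}\), a \((d-1)\)-simplex, and the facets of \(\Delta'\) are precisely the \(n-1\) facets of \(\Delta\) other than \(F\) (the ridge \(F\setminus\{v\}\) lies in a second facet because, by Lemma \ref{DimFoldAcycPureandConnected}, the facet--ridge graph of \(\Delta\) is connected and \(n\ge2\), so \(F\) is not isolated). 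A Mayer--Vietoris argument on the links of faces \(\sigma\) with \(v\notin\sigma\) and \(\abs{\sigma}<d\) — using that the corresponding links in \(\Delta\), in \(\ideal{F}\), and in \(\ideal{F\setminus\{v\}}\) are all acyclic (the first by \(d\)-fold acyclicity, the others because they are simplices) — shows that \(\Delta'\) is again \(d\)-fold acyclic. By the inductive hypothesis \(\Delta'\) has a stacked shelling \(F_1,\dots,F_{n-1}\); since \(\ideal{F_1,\dots,F_{n-1}}\cap\ideal{F} = \ideal{F\setminus\{v\}}\) is a \((d-1)\)-simplex, \(F_1,\dots,F_{n-1},F\) is a stacked shelling of \(\Delta\), as desired.

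It remains to produce a free vertex when \(n\ge2\). For \(d\le1\) this is immediate (any vertex of a \(0\)-dimensional complex, or a leaf of a tree), so assume \(d\ge2\). Here I would use the \(f\)-vector. Since \(f(\Delta,t)=(1+t)^d(1+nt)\), the complex \(\Delta\) has exactly \(nd+1\) ridges and \(n\) facets, hence \(n(d+1)\) incidences between facets and their ridges. Because each ridge lies in at least one facet (\(\Delta\) is pure by Lemma \ref{DimFoldAcycPureandConnected}), at most \(n-1\) ridges lie in two or more facets, so at least \(n(d-1)+2\) ridges lie in exactly one facet; call these the \emph{free ridges}. No facet has all \(d+1\) of its ridges free, since such a facet would be isolated in the connected facet--ridge graph of \(\Delta\). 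Counting free ridges according to the unique facet containing each of them, and using that every facet contributes at most \(d\), forces some facet \(F\) to have exactly \(d\) free ridges; let \(v\) be the unique vertex of \(F\) lying in all \(d\) of them (equivalently, \(F\setminus\{v\}\) is the one ridge of \(F\) that is shared).

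To see that this \(v\) is free, I would pass to the link \(L := \link_\Delta v\), which is \((d-1)\)-dimensional and, since \(\Delta\) is \(d\)-fold acyclic, \((d-1)\)-fold acyclic; so by Lemma \ref{DimFoldAcycPureandConnected} its facet--ridge graph is connected. Now \(F\setminus\{v\}\) is a facet of \(L\), and a facet of \(L\) contains the ridge \((F\setminus\{v\})\setminus\{u\}\) if and only if the corresponding facet of \(\Delta\) contains \(F\setminus\{u\}\) — a free ridge of \(F\), lying in \(F\) alone. Hence each of the \(d\) ridges of the facet \(F\setminus\{v\}\) of \(L\) lies in that facet alone, so \(F\setminus\{v\}\) is isolated in the facet--ridge graph of \(L\). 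Connectedness forces \(L = \ideal{F\setminus\{v\}}\), i.e. \(v\) lies in no facet of \(\Delta\) besides \(F\). This is the required free vertex, closing the induction.

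I expect the extraction of a free vertex (the second and third paragraphs) to be the main obstacle. A naive count of vertex degrees is not enough, since the average vertex degree can be \(2\) or more; the point is to combine the \(f\)-vector count of free ridges with the connectivity of the facet--ridge graphs of \emph{links}, which is precisely where \(d\)-fold acyclicity is used. Once a free vertex is in hand, the reduction to fewer facets and the check that the deletion remains \(d\)-fold acyclic are routine.
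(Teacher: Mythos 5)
Your proof is correct, but it takes a genuinely different route from the paper. The paper argues globally: it takes a spanning tree $T$ of the facet--ridge graph $\mathcal{F}(\Delta)$, orders the facets so that initial segments of $T$ are connected, assigns to this order the intervals $[\varnothing,F_1]$ and $[v_i,F_i]$ with $v_i = F_i\setminus R_i$, shows every face of an initial segment lies in one of its intervals, and then compares the sum of the intervals' $f$-polynomials, $(1+t)^{d+1}+(j-1)t(1+t)^d=(1+t)^d(1+jt)$, with $f(\Delta,t)$ to force disjointness --- so the chosen order is a stacked shelling in one shot. You instead induct on the number of facets by producing a \emph{free vertex}: the $f$-polynomial gives the ridge count $nd+1$, a double count against the $n(d+1)$ facet--ridge incidences yields at least $n(d-1)+2$ free ridges, connectivity of $\mathcal{F}(\Delta)$ caps each facet at $d$ free ridges and so locates a facet $F$ with exactly $d$, and then applying Lemma \ref{DimFoldAcycPureandConnected} to $\link_\Delta v$ (which is $(d-1)$-dimensional and $(d-1)$-fold acyclic) shows $F\setminus\{v\}$ would be isolated in that link's facet--ridge graph unless $v$ lies in no other facet. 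The peeling step and the Mayer--Vietoris check that the deletion stays $d$-fold acyclic are sound (in a careful write-up, split the cases $\sigma\not\subseteq F$, where the links in $\Delta$ and $\Delta'$ simply coincide, and $\sigma\subseteq F\setminus\{v\}$, where the sequence is needed), and the base cases $d\le 1$ are rightly handled separately since the link argument degenerates there. What each approach buys: the paper's proof is shorter, non-inductive, and leans on the ``converse of the addition principle'' disjointness trick; yours is longer but yields a stronger structural byproduct --- every $d$-dimensional $d$-fold acyclic complex with at least two facets has a free vertex, i.e.\ admits an elementary collapse onto a smaller complex of the same kind --- and makes the reverse shelling completely explicit. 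Both proofs ultimately rest on the same two inputs, the $f$-polynomial formula $(1+t)^d(1+nt)$ and Lemma \ref{DimFoldAcycPureandConnected}.
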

\begin{proof}

Let $T$ be a spanning tree of $\mathcal{F}(\Delta)$. Assume $\Delta$ has $j$ facets and order its facets $F_1,\dots,F_j$ so that $T$ restricted to $F_1,\dots,F_{i}$ is connected for all $i \in [j].$ We will show that this order is a stacked shelling of $\Delta$.

Each edge in $T$ corresponds to a ridge of $\Delta$. For each $1<i \le j$, $T$ restricted to $F_1 , \dots F_{i}$ differs from $T$ restricted to $F_1,\dots,F_{i-1}$ by only the vertex $F_{i}$ and the edge that connects it to the rest of the tree. We will call this edge $R_{i}$ and note that $R_{i} = F_{i} \cap F_{\ell}$ for some $\ell < i$. We derive from this a collection of intervals in $\Delta$. One special interval is \([\varnothing,F_1]\). The rest of the intervals are \([v_i,F_i]\), where $v_i = F_i \setminus R_i$ for $1 < i \le j$.

Let $i<j$, let \(\sigma\) be a face in \(\langle F_1, \ldots, F_i \rangle\), and let \(F_m\) be the first facet in the facet order which contains \(\sigma\). We note that \(R_m\) is contained in \(F_\ell\) for some $\ell < m$, and \(\sigma \not \subseteq F_\ell\) for any \(\ell < m\). Therefore it must be that \(\sigma \not \in [\varnothing, R_m]\) and instead \(\sigma \in [v_m,F_m]\). This means that any initial collection of intervals contains the complex generated by the corresponding facets.

Since there are \(j\) facets in the total facet order, this gives a formula for the sum of the $f$-polynomials of each interval as \((1+t)^{d+1}+(1+t)^{d}((j-1)t)\), which simplifies to \((1+t)^{d}(1+jt)\). By Lemma~\ref{ddfvector}, this is exactly the $f$-polynomial of \(\Delta\). By the converse of the addition principle, the intervals must be disjoint.

The facet order \(F_1, \ldots, F_j\) determines a collection of intervals such that the bottom element of each interval is a vertex, the intervals are disjoint, and any initial segment is the complex generated by those facets. Therefore \(F_1, \ldots, F_j\) is a stacked shelling order, and \(\Delta\) must be a stacked complex.
\end{proof}

Applying Proposition~\ref{StackedIsDiamondableProp} and Theorem~\ref{DimfoldtoStacked}, we see that a $d$-dimensional complex $\Delta$ is stacked if and only if it is $d$-fold acyclic. This leads immediately to our main result of this section.

\begin{corollary}\label{dCor}
Conjecture~\ref{DiamondConjecture} holds when $k = \dim\Delta$.
\end{corollary}

\section{Open Questions}\label{open}

While our construction gives a counterexample to Conjecture~\ref{DiamondConjecture}, our result in Theorem~\ref{AcycBoolTreeThm} provides an explicit witness to the structure of the $f$-polynomials of $k$-fold acyclic complexes. Perhaps the most interesting questions in light of Remark~\ref{SmallCounterexample} are in determining any additional conditions that would make the conjecture hold. 
We know that \(\Omega_3\) is the lowest dimensional counterexample possible, but we have no reason to suspect that is in other senses the smallest.

\begin{question}\label{smaller_fvec}
What is a minimal counterexample to Conjecture~\ref{DiamondConjecture} with respect to the total number of faces, vertices, or facets, respectively?
\end{question}
Though our counterexample is three-dimensional, it cannot be embedded into $\Rr^3$. It is unknown if non-embeddability is necessary to be a counterexample.

\begin{question}\label{embed}
Is it possible to find a counterexample to Conjecture~\ref{DiamondConjecture} that embeds into $\Rr^3$? In general, is it possible to find a $d$-dimensional counterexample that embeds into $\Rr^d$?
\end{question}
It is also unknown whether complexes with additional topological or combinatorial structure could be counterexamples.

\begin{question}\label{ball}
Do all $k$-fold acyclic simplicial balls have a rank $k$ boolean interval decomposition? If they do, must there be a decomposition so that the bottoms of these intervals forms a subcomplex?
\end{question}
Although a bit further afield from the techniques developed in this paper, one can ask about random simplicial complexes.

\begin{question}
For a fixed triple of \(k,d,v\), there are finite \(k\)-fold acyclic complexes of dimension \(d\) with \(v\) vertices. Sampling from this set with the uniform distribution, what is the probability the chosen complex has a rank \(k\) boolean decomposition? What is the limiting probability as \(v\) goes towards \(\infty\)?
\end{question}

\section*{Acknowledgements}

The second author would like to thank Jeremy Martin for all his support and guidance over the past several years.

Both authors thank Steve Klee, Isabella Novik, John Shareshian, and Matt Stamps for helpful conversations about the presentation of these results. We are especially grateful to Christos Athanasiadis, Justin Lyle, Alex McDonough, and Lei Xue, who gave invaluable comments and suggestions on earlier drafts of this paper. We also thank the anonymous referees for their careful reading and helpful remarks.

The open-source software Sage \cite{sage} was used throughout this project.

\bibliographystyle{amsplain}

\providecommand{\bysame}{\leavevmode\hbox to3em{\hrulefill}\thinspace}
\providecommand{\MR}{\relax\ifhmode\unskip\space\fi MR }
\providecommand{\MRhref}[2]{%
  \href{http://www.ams.org/mathscinet-getitem?mr=#1}{#2}
}
\providecommand{\href}[2]{#2}

\bibliography{ourbib}

\end{document}